\newtheorem{lemma}{Lemma}[section]
\newtheorem{theorem}[lemma]{Theorem}
\newtheorem{corollary}[lemma]{Corollary}
\title{Reconstructing the topology on monoids and polymorphism clones of the rationals}
\author{Mike Behrisch\thanks{%
The research of the first named author was partly supported by the Austrian
Science Fund (FWF) under grant no.~I836-N23.},
Tech\-ni\-sche Uni\-ver\-si\-t\"{a}t Wien\thanks{%
Institute for Discrete Mathematics and Geometry, Wied\-ner Haupt\-stra{\ss}e~8--10, 1040~Wien, Austria}, and\and
John K Truss and Edith Vargas-Garc\'ia\thanks{%
The research of the third author was supported by CONACYT.},
University of Leeds\thanks{%
Department of Pure Mathematics, University of Leeds, Leeds LS2 9JT, UK}.%
\thanks{We thank Christian and Maja Pech for helpful remarks.}}
\date{e-mails: \url{behrisch@logic.at},  \url{pmtjkt@leeds.ac.uk}, \url{alemaniamir5@gmail.com}.}%
\begin{document}
\maketitle

\begin{abstract} We show how to reconstruct the topology on the monoid of endomorphisms of the rational numbers under the
strict or reflexive order relation, and the polymorphism clone of the rational numbers under the reflexive relation.
In addition we show how automatic homeomorphicity results can be lifted to polymorphism clones generated by monoids.
\end{abstract}

2010 Mathematics Subject Classification 08A35

keywords: rationals, automatic homeomorphicity, embedding, endomorphism, polymorphism clone

\section{Introduction}
\label{sect:1}
We write~$M$ and~$E$ for the monoids of endomorphisms of
$(\mathbb{Q}, <)$ (coinciding with the self-embeddings of
$(\mathbb{Q},<)$ since~$<$ is linear) and $(\mathbb{Q}, \le)$ respectively, and~$G$ for
the automorphism group $\mathop{\mathrm{Aut}}(\mathbb{Q}, <)$ (which equals $\mathop{\mathrm{Aut}}(\mathbb{Q}, \le)$), so that~$G$ is the
family of invertible members of~$E$. Our main results are that~$M$ and~$E$ have
`automatic homeomorphicity' in the sense of~\cite{Bodirsky}, with a corresponding result for the polymorphism clone of $(\mathbb{Q}, \le)$. This intuitively means that
the natural topology (see below for a precise definition) can be recognized inside the algebraic structure; the more formal
definition says that any isomorphism from~$M$ to a closed submonoid of the full transformation monoid on a countable set is
also necessarily a homeomorphism, with an analogous statement for~$E$ and $\mathop{\mathrm{Pol}}(\mathbb{Q}, \le)$.
The case of $\mathop{\mathrm{Pol}}(\mathbb{Q}, <)$ is not yet solved. According to the treatment given in~\cite{Bodirsky}, the main preliminary
technical result needed to demonstrate automatic homeomorphicity for~$M$ is that any injective endomorphism of~$M$ which fixes~$G$ pointwise, also
fixes every member of~$M$ (since~$M$ is the closure of~$G$). We need a
slightly more general variant of this fact for the proofs regarding the monoid~$E$.
We are also able to show the truth of the corresponding statement
for automorphisms of~$E$ (though the deduction of automatic homeomorphicity requires more work, and a new and direct method,
since~$G$ is not dense in~$E$). We may identify~$M$ as the family of injective members of~$E$; another important monoid which
plays a role in some of the proofs is that of the surjective maps in~$E$,
denoted by~$S$, which we show to coincide with the epimorphisms of
$(\mathbb{Q}, \le)$. We shall see that, in fact, each
of these is definable (in the monoid language) in~$E$.

In order to establish the mentioned preliminary result for~$M$, we somehow have to represent the members of~$M$ inside~$G$. The most natural
and obvious way to attempt to do this is via centralizers. Indeed in similar `interpretability' results for~$G$, this is often
sufficient, and in our case, we can make quite good progress using this idea. To be more concrete, let us write~$\xi$ for the
given injective endomorphism of~$M$ which fixes~$G$ pointwise. Given any $f \in M$, it is natural to consider
$C_G(f) = \{g \in G\colon fg = gf\}$. For certain values of~$f$, we can show that $C_G(f) = C_G(f_1) \Rightarrow f = f_1$. From
this it easily follows that $\xi(f) = f$. This is because the centralizers of~$f$ and $\xi(f)$ are equal, as
$g \in C_G(\xi(f)) \Leftrightarrow g\xi(f) = \xi(f)g \Leftrightarrow \xi(gf) = \xi(fg)$ (as~$\xi$ fixes members of~$G$)
$\Leftrightarrow gf = fg \Leftrightarrow g \in C_G(f)$, and so from $C_G(f) = C_G(\xi(f))$ we deduce that $\xi(f) = f$. Among
elements~$f$ to which this applies are those of the form $f(x) = x$ if $x < \pi$, $x + 1$ if $x > \pi$ (and similarly for any
other irrational), as well as many others. An example of an~$f$ to which this does \emph{not} apply is $f(x) = x$ if $x < 0$,
$x + 1$ if $x \ge 0$ (which shares a centralizer with~$f_1$ given by $f_1(x) = x$ if $x \le 0$, $x + 1$ if $x > 0$). In the
case of order-preserving permutation groups, arguments using centralizers
are widespread, and solve many problems. See~\cite{Glass} for material on this.

To prove that $\xi(f) = f$ for general~$f$ is however more involved, and a technique described in~\cite{Bodirsky} which uses
sets of \emph{pairs} of group elements rather than subsets of~$G$ is used instead. This is
$S(f) = \{(\alpha, \beta) \in G^2\colon \alpha f = f \beta\}$. Our method is then to find certain subfamilies of~$M$, which we
denote by~$\Gamma$, $\Gamma^+$, $\Gamma^-$, and $\Gamma^\pm$, and show that for
$f \in \Gamma \cup \Gamma^+ \cup \Gamma^- \cup \Gamma^\pm$, $S(f) = S(f_1) \Leftrightarrow f = f_1$, from which by essentially
the same proof as above, $\xi(f) = f$. Then we show that for any member~$f$ of~$M$ there are $g_1, g_2$ lying in one of
$\Gamma$, $\Gamma^+$, $\Gamma^-$, $\Gamma^\pm$ such that $g_1f = g_2$, and use a trick involving cancellation  to conclude
the proof. We need a few technicalities to
achieve this. From this it immediately follows by results from~\cite{Bodirsky} that~$M$ has automatic
homeomorphicity (Theorem~\ref{2.6}).

In the next section we move on to a discussion of the endomorphism monoid~$E$ of $(\mathbb{Q}, \le)$. We can show that
various natural subsets of~$E$ are definable in~$E$, and we can lift the technical result concerning the map~$\xi$ to this
context too (assuming that it is an automorphism).

In section~\ref{sect:4}, we discuss the analogous result for~$E$. The key idea here is to analyze directly the possible actions of~$E$ on
a countable set~$\Omega$, which is the set which features in the definition
of `automatic homeomorphicity'. Any isomorphism of~$E$ to a closed
submonoid of the transformation monoid on~$\Omega$ gives rise to a monoid `action' of~$E$ on~$\Omega$. We
focus on the \emph{group} orbits of $G \subseteq E$, and use them to guide our analysis. Provided we know that the restriction
of the isomorphism to~$M$ maps it to a \emph{closed} submonoid, we can deduce from Theorem~\ref{2.6} that it is a
homeomorphism. Closedness of this image is easy to prove if the
isomorphism sends constants to constants, but examples show that, in
general, we cannot rely on this property. Thus, we have to invent another
method to ensure closedness, which is done by generalizing Lemma~12
from~\cite{Bodirsky}.
After this is solved, we are able to demonstrate precisely how the members of~$M$ act on~$\Omega$, and using the
technical lemmas from section~\ref{sect:3}, we can then \emph{directly} describe how~$E$ acts, and show that the isomorphism assumed to
exist must also be a homeomorphism.

In section~\ref{sect:5}, we use the earlier results to lift automatic homeomorphicity to the corresponding polymorphism clone. So
far this argument only works in the reflexive case, since `idempotents'
with \emph{finite} image are required in the proof, which exist in~$E$ but not
in~$M$. The problem highlighted in the previous paragraph for the monoid does not cause difficulties here however, since
the fact that for clones the images of constants are necessarily constants avoids the difficulty.

In the final section, we give a method for lifting automatic
homeomorphicity (and also automatic continuity---a variant, where every
homomorphism into the full transformation monoid / clone on a countable set is required to be continuous) from monoids to the clones they generate.
In this context, we can immediately deduce that the polymorphism clones $\langle \mathop{\mathrm{End}}(\mathbb{Q}, <) \rangle$
and $\langle \mathop{\mathrm{End}}(\mathbb{Q}, \leq) \rangle$ generated by $\mathop{\mathrm{End}}(\mathbb{Q}, <)$
and $\mathop{\mathrm{End}}(\mathbb{Q}, \leq)$, respectively, have automatic
homeomorphicity. These clones are (rather small) subclones of the corresponding full polymorphism clones.

To make sense of results about continuity, we need to recall what the topology is, on~$G$, $M$, $E$, and indeed also on the
clone. For~$E$, we take as sub-basic open sets all sets of the form
$\mathcal{B}_{qr} = \mbox{$\{f \in E\colon f(q) = r\}$}$, and the
topologies on~$G$ and~$M$ are then the induced ones. Basic open sets are then finite intersections of these, so have the form
$\{f \in E\colon \mathord{f\restriction_B} = g\}$, where~$B$ is a finite subset
of~$\mathbb Q$ and $g\colon B\to\mathbb{Q}$. In the polymorphism clone~$P$, the same sets are
used, but with higher `arities'. Thus for each~$n$, and $q_1, q_2, \ldots, q_n, r \in \mathbb{Q}$,
$\mathcal{B}_{q_1q_2\ldots q_nr} = \{f \in P^{(n)}\colon f(q_1, q_2, \ldots, q_n) = r\}$ is taken as a sub-basic open set. Similarly, in the
full transformation monoid $\mathop{\mathrm{Tr}}(\Omega)$ on a set~$\Omega$ (usually countable), sub-basic open sets have the form
$\{f \in \mathop{\mathrm{Tr}}(\Omega)\colon f(x) = y\}$ for $x, y \in \Omega$, with basic open sets as finite intersections of these, and
sub-basic open sets on the polymorphism clone similarly given by allowing increased arities. We remark that saying that~$G$ is
dense in~$M$ thus says that any embedding of $(\mathbb{Q}, <)$ can be approximated by automorphisms on arbitrarily large
finite sets. Therefore, saying that~$M$ is the closure of~$G$ says that any limit of members of~$G$ lies in~$M$ and any member
of~$M$ may be expressed as such a limit.
It is worth noting that, since~$G$, $M$ and~$E$ or the corresponding monoids
on~$\Omega$ live on a countable carrier set, their topology is actually
metrizable by an ultrametric (see~\cite[1.1, p.~132]{Pech} for details). This enables us to use sequential convergence
and continuity instead of the net analogues needed in general, and we shall
exploit this, for instance, in section~\ref{sect:6}.

An alternative proof of the main technical result Corollary~\ref{2.5} was given independently by James Hyde~\cite{Hyde} (not
using methods from~\cite{Bodirsky}).

\section{Main technical lemmas for~$M$}
\label{sect:2}

Throughout this section we suppose that~$\xi$ is an injective endomorphism of~$M$ which fixes~$G$ pointwise, and our goal is
to show that it also fixes~$M$ pointwise. It is fairly easy to show by `bare hands' that there are some members of~$M$ which
must be fixed, for instance those~$f$ that are characterized by their centralizers (meaning that if~$f$ and~$f'$ have equal
centralizers in~$G$, then they are equal). However, this type of argument only applies to a limited range of members of~$M$,
and we need a more systematic approach. For this we isolate particular subfamilies of members of~$M$, written~$\Gamma$,
$\Gamma^+$, $\Gamma^-$, $\Gamma^\pm$, show that all their members are fixed, and then lift this to all members~$f$ of~$M$
by writing~$f$ in terms of members of $\Gamma \cup \Gamma^+ \cup \Gamma^- \cup \Gamma^\pm$. To describe what~$\Gamma$ is, we
require the following definition.

The \emph{$2$-coloured version of the rationals} denoted by $\mathbb{Q}_2$, can be characterized as the set~$\mathbb Q$ of
rational numbers, together with a colouring function $F\colon \mathbb{Q} \to C = \{\mbox{red, blue}\}$ such that for every
$x < y$ in~$\mathbb Q$ and $c \in C$ there is $z\in \mathbb{Q}$ with $x < z < y$ and $F(z) = c$. It is well known that this
exists and is unique up to isomorphism.

A key observation in what we do is that the only relevant information about $f \in M$ for our present purposes is the value of
its \emph{image}. This is because, if~$f_1$ and~$f_2$ in~$M$ have the same image, then $f_2^{-1}f_1$ is (defined and) an
automorphism, so by hypothesis is fixed by~$\xi$, and, since
$f_2(f_2^{-1}f_1) = f_1$, it is immediate that~$f_1$ is fixed if and only if~$f_2$ is. So we
really need to focus mainly on subsets of~$\mathbb Q$, though we often construe them as images. In fact with regard to this,
it is clear that a subset of~$\mathbb Q$ is the image of some self-embedding if and only if it is isomorphic to~$\mathbb Q$.

With this in mind, for any $A \subseteq \mathbb{Q}$ isomorphic to~$\mathbb Q$, let us define a relation~$\sim$ on~$\mathbb Q$
by $x \sim y$ if there is at most one point of~$A$ strictly between~$x$ and~$y$. Then (rather surprisingly) this is an equivalence
relation. For if $x \sim y \sim z$ and not $x \sim z$, then there must be distinct $a, b \in A$ between~$x$ and~$z$. The
interval between~$x$ and~$z$ cannot be contained in either of the intervals between~$x$ and~$y$ or between~$y$ and~$z$ (since
then $x \sim z$ would be immediate). We assume without loss of generality that $x < y$, and so it follows that $y < z$. One of
$a, b$ lies in $[x,y]$ and the other in $[y, z]$, but now~$a$ and~$b$ are consecutive members of a copy of~$\mathbb Q$, which
is impossible.

The equivalence classes are clearly convex and can intersect~$A$ in at most one point. So this gives two main options, that is,
equivalence classes intersecting~$A$ (in a singleton), which we call `red', and those which are disjoint from~$A$, which we
call `blue'. Furthermore, being convex subsets of~$\mathbb Q$, each $\sim$-class is a non-empty interval of~$\mathbb Q$ of the
form $(a, b)$, $(a, b]$, $[a, b)$, or $[a, b]$ (where $a, b \in \mathbb{R} \cup \{\pm \infty\}$ and in the last case $a = b$
is allowed).

From the definition of~$\sim$ and in particular from convexity of the
equivalence classes we observe the following properties. Putting
$[x]_{\sim}<[y]_{\sim}$ for rationals $x,y$ with $x\not\sim y$ if and
only if $x<y$ gives a well-defined strict linear order on the equivalence
classes. Moreover, by the choice of~$A$, if $x,y\in A$ and $x<y$ then
there are infinitely many points $z\in A$ satisfying $x<z<y$. That is,
between two distinct red classes there are infinitely many red classes.
Similarly, if~$x$ and~$y$ belong to distinct blue equivalence classes,
there must be at least two and thus infinitely many points of~$A$ in
between. Also if~$x$ is in a red class and~$y$ is in a blue one (and we
may assume that actually $x\in A$), then there must be another (and
hence infinitely many) point of~$A$ between them. Consequently, between
any two distinct equivalence classes, we find infinitely many red
classes. Thus, whenever we can show that at least one blue class lies
between any two points of~$A$, then between any two distinct
$\sim$-classes there is a red as well as a blue class.

We denote by~$\Gamma$ the family of all $f \in M$ such that~$\mathbb Q$ may be written as the disjoint union
$\bigcup \{A_q\colon q \in \mathbb{Q}_2\}$ of convex subsets~$A_q$ of~$\mathbb Q$ such that $q < r \Rightarrow A_q < A_r$, each~$A_q$ is
isomorphic to~$\mathbb Q$, and if~$q$ is a red point of $\mathbb{Q}_2$ then~$A_q$ is a red interval of~$\mathbb Q$ with
respect to $\mathop{\mathrm{im}}(f)$ (that is, $|A_q \cap \mathop{\mathrm{im}}(f)| = 1$), and if~$q$ is a blue point of $\mathbb{Q}_2$ then~$A_q$ is blue
(that is, $A_q \cap \mathop{\mathrm{im}}(f) = \emptyset$). The intuition is that the points of the image of~$f$ are spread out as much as
they possibly can be. To handle members of~$M$ whose image may be bounded above or below, we also need to consider~$\Gamma^+$,
which is defined similarly but using $\mathbb{Q}_2 \cup \{\infty\}$ ($\mathbb{Q}_2$ with a right endpoint added), and
similarly~$\Gamma^-$, $\Gamma^\pm$ from $\mathbb{Q}_2 \cup \{-\infty\}$, $\mathbb{Q}_2 \cup \{\pm\infty\}$ (all infinite points coloured blue). The need to consider these variants was pointed out to us by Christian Pech.

A main technical lemma, adapted from~\cite{Bodirsky}, shows how certain pairs of finite partial automorphisms can be extended
to pairs of automorphisms. For this purpose, for any $g \in M$ we let $\sim$ be the equivalence relation defined above with
respect to $\mathop{\mathrm{im}}(g)$, and let~$P$ be the family of all pairs $(a, b)$ of finite partial automorphisms of~$\mathbb Q$
satisfying the following properties:

\begin{enumerate}[(1)]
\item\label{clause:1}
$a$ is colour-preserving (that is, an element $x\in\mathop{\mathrm{dom}}(a)$ belongs to a
red interval if and only if $a(x)$ belongs to a red interval), strongly $\sim$-preserving (meaning that for
$x, y \in \mathop{\mathrm{dom}}(a)$, $x \sim y \Leftrightarrow a(x) \sim a(y)$), and if there is a least or greatest blue interval, then~$a$ preserves it,
\item\label{clause:2}
if $x \in \mathop{\mathrm{dom}}(a)$ lies in a red interval containing a point~$y$ of $\mathop{\mathrm{im}}(g)$, then $y \in \mathop{\mathrm{dom}}(a)$,
\item\label{clause:3}
if $x \in \mathop{\mathrm{im}}(a)$ lies in a red interval containing a point~$y$ of $\mathop{\mathrm{im}}(g)$, then $y \in \mathop{\mathrm{im}}(a)$,
\item\label{clause:4}
$g(\mathop{\mathrm{dom}}(b))\subseteq \mathop{\mathrm{dom}}(a)$,
\item\label{clause:5}
$g(\mathop{\mathrm{im}}(b))\subseteq \mathop{\mathrm{im}}(a)$,
\item\label{clause:6}
if $x \in \mathop{\mathrm{im}}(g) \cap \mathop{\mathrm{dom}}(a)$, then $g^{-1}(x) \in \mathop{\mathrm{dom}}(b)$, and $gbg^{-1}(x) = a(x)$,
\item\label{clause:7}
if $x \in \mathop{\mathrm{im}}(g) \cap \mathop{\mathrm{im}}(a)$, then $g^{-1}(x) \in \mathop{\mathrm{im}}(b)$, and $gb^{-1}g^{-1}(x) = a^{-1}(x)$.
\end{enumerate}

\begin{lemma} \label{2.1} Let $g \in \Gamma \cup \Gamma^+ \cup \Gamma^- \cup \Gamma^\pm$. Then any $(a,b) \in P$ can be
extended to a pair of automorphisms $(\alpha, \beta)$ of\/ $(\mathbb{Q}, < )$ such that $\alpha g = g \beta$.   \end{lemma}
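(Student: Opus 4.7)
The natural route is a back-and-forth: fix an enumeration $(q_n)_{n<\omega}$ of $\mathbb{Q}$ and build nested pairs $(a_n,b_n)\in P$ starting from $(a_0,b_0)=(a,b)$, arranging that at stage~$n$ the point $q_n$ lies in each of $\mathop{\mathrm{dom}}(a_n)$, $\mathop{\mathrm{im}}(a_n)$, $\mathop{\mathrm{dom}}(b_n)$, $\mathop{\mathrm{im}}(b_n)$. The unions $\alpha=\bigcup_n a_n$ and $\beta=\bigcup_n b_n$ will then be total automorphisms of $(\mathbb{Q},<)$, and clause~(\ref{clause:6}), which is preserved at every stage, gives $g\beta(x)=\alpha g(x)$ for every $x\in\mathbb{Q}$, so $\alpha g=g\beta$ as required.

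The inductive step reduces to a one-point extension: given $(a',b')\in P$ and $q\in\mathbb{Q}$, produce $(a'',b'')\in P$ extending $(a',b')$ with $q\in\mathop{\mathrm{dom}}(a'')$ (the three further tasks of adding to $\mathop{\mathrm{im}}(a')$, $\mathop{\mathrm{dom}}(b')$, $\mathop{\mathrm{im}}(b')$ are analogous or dual; for $\mathop{\mathrm{dom}}(b')$ I would first add $g(q)$ to $\mathop{\mathrm{dom}}(a')$ by the present procedure, whence clause~(\ref{clause:6}) forces $b''(q)=g^{-1}\bigl(a''(g(q))\bigr)$, a legal choice because the $\sim$-class of $g(q)$ is red and hence its image under $a''$ will lie in $\mathop{\mathrm{im}}(g)$). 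Since $a'$ is strongly $\sim$-preserving and colour-preserving, it induces a finite, order- and colour-preserving partial map $\bar a'$ on the $\sim$-classes it meets, and by clause~(\ref{clause:1}) it also respects the least or greatest blue class when one exists (the $\Gamma^+$, $\Gamma^-$, $\Gamma^\pm$ cases). The key structural fact, highlighted in the excerpt, is that between any two distinct $\sim$-classes there lie infinitely many red and infinitely many blue classes; this lets me extend $\bar a'$ to a target $\sim$-class $D$ for $[q]_\sim$ of matching colour, placed in the correct order gap, and I then pick $a''(q)\in D$ respecting the order of the finitely many existing values of $a'$ inside~$D$.

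If $D$ is blue, no further work is required. If $D$ is red, let $y\in[q]_\sim$ and $y'\in D$ be the unique $\mathop{\mathrm{im}}(g)$-points in these classes; clauses~(\ref{clause:2}) and~(\ref{clause:3}) force $y\in\mathop{\mathrm{dom}}(a'')$ and $y'\in\mathop{\mathrm{im}}(a'')$, so I set $a''(y)=y'$, and clauses~(\ref{clause:6}) and~(\ref{clause:7}) then compel $b''(g^{-1}(y))=g^{-1}(y')$, an assignment that is order-consistent because $g$ is an order-embedding. I expect the main obstacle to be the book-keeping around this cascade: the forced $b''$-assignment could in principle reactivate clause~(\ref{clause:4}) or~(\ref{clause:5}), but since $g(g^{-1}(y))=y\in\mathop{\mathrm{dom}}(a'')$ already, no secondary $a$-extension is triggered and the cascade terminates in one round. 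Thus each stage adds only finitely many points, the invariant $(a_n,b_n)\in P$ is maintained throughout, and the limit automorphisms $\alpha,\beta$ satisfy $\alpha g=g\beta$.
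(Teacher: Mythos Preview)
Your back-and-forth argument is correct, and the invariant $P$ is well-suited to it: the cascade you identify (adding $y\mapsto y'$ to $a$ forces $g^{-1}(y)\mapsto g^{-1}(y')$ in $b$, which by clause~(\ref{clause:4}) feeds back only to $y$, already present) does terminate, and the order-consistency of the forced $b$-extension follows because $g(\mathop{\mathrm{dom}}(b'))\subseteq\mathop{\mathrm{dom}}(a')$ together with clause~(\ref{clause:6}) translates $b'$-order questions into $a'$-order questions, which are settled by your choice of $D$ in the right gap.

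The paper, however, takes a more economical route that avoids maintaining the pair invariant altogether. It observes that $a$ induces a finite colour-preserving partial automorphism $\overline a$ of the quotient $\mathbb{Q}_2$ (or $\mathbb{Q}_2\cup\{\pm\infty\}$), extends $\overline a$ in one stroke to $\overline\alpha\in\mathop{\mathrm{Aut}}(\mathbb{Q}_2)$ by homogeneity, and then extends $a$ to $\alpha\in\mathop{\mathrm{Aut}}(\mathbb{Q},<)$ block-by-block so that $\alpha(A_q)=A_{\overline\alpha(q)}$ and $\alpha(\mathop{\mathrm{im}} g)=\mathop{\mathrm{im}} g$; clauses~(\ref{clause:6}) and~(\ref{clause:7}) are used only to check that $a$ already respects $\mathop{\mathrm{im}} g$ on its finite domain. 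The punchline is that $\beta$ is then \emph{defined} as $g^{-1}\alpha g$, automatically an automorphism since $\alpha$ stabilizes $\mathop{\mathrm{im}} g$, and $\beta\supseteq b$ follows from clauses~(\ref{clause:4}) and~(\ref{clause:6}). So the paper trades your simultaneous two-sorted back-and-forth for a single back-and-forth on $\alpha$ (factored through the quotient), getting $\beta$ for free; your approach is more hands-on and self-contained, while the paper's exposes more clearly why the $\Gamma$-hypothesis (the $\mathbb{Q}_2$-structure of the classes) is exactly what is needed.
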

\begin{proof} We first treat the case $g \in \Gamma$. We define a finite partial automorphism~$\overline a$ of $\mathbb{Q}_2$
thus. Let $\mathbb{Q} = \bigcup\{A_q\colon q \in \mathbb{Q}_2\}$ as in the definition of $g \in \Gamma$, and let
${\overline a}(q) = r$ if there is $x \in A_q \cap \mathop{\mathrm{dom}}(a)$ such
that $a(x) \in A_r$. Clause~\eqref{clause:1} guarantees that
$\overline a$ is well-defined, and clauses~\eqref{clause:1}, \eqref{clause:2}, \eqref{clause:3} ensure that it is colour-preserving. Extend~$\overline a$ to an
automorphism~$\overline \alpha$ of $\mathbb{Q}_2$, and let $\alpha \in \mathop{\mathrm{Aut}}(\mathbb{Q}, <)$ be an extension of~$a$
satisfying $\alpha(\mathop{\mathrm{im}}(g))=\mathop{\mathrm{im}}(g)$
(it is possible to achieve this because
$a(\mathop{\mathrm{im}}(g)\cap\mathop{\mathrm{dom}}(a))\subseteq \mathop{\mathrm{im}}(g)$
by~\eqref{clause:6} and
$a^{-1}(\mathop{\mathrm{im}}(g)\cap\mathop{\mathrm{im}}(a))\subseteq \mathop{\mathrm{im}}(g)$
by~\eqref{clause:7})
such that for each $q \in \mathbb{Q}_2$, $\alpha(A_q) = A_{{\overline \alpha}(q)}$. This is possible since
for any~$q$ such that $A_q \cap \mathop{\mathrm{dom}}(a) \neq \emptyset$, if $a(A_q \cap \mathop{\mathrm{dom}}(a)) \subseteq A_r$, then
${\overline a}(q) = r$. Let $\beta = g^{-1}\alpha g$, which is also an
automorphism, since $\alpha$ preserves $\mathop{\mathrm{im}}(g)$.

A similar argument applies to~$\Gamma^+$, $\Gamma^-$, $\Gamma^\pm$, using $\mathbb{Q}_2 \cup \{\infty\}$,
$\mathbb{Q}_2 \cup \{-\infty\}$, $\mathbb{Q}_2 \cup \{\pm \infty\}$ respectively in the argument in place of~$\mathbb{Q}_2$,
noting that the final condition in clause~\eqref{clause:1} ensures that the greatest or least blue interval, if it exists,
is preserved by~$\overline \alpha$.
\end{proof}

\begin{lemma}\label{2.2}
Any injective monoid homomorphism~$\xi\colon M\to E$ which fixes~$G$ pointwise also fixes every member of\/
$\Gamma \cup \Gamma^+ \cup \Gamma^- \cup \Gamma^\pm$.
\end{lemma}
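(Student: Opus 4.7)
Fix $f\in\Gamma\cup\Gamma^+\cup\Gamma^-\cup\Gamma^\pm$ and set $f'=\xi(f)\in E$. The plan is to exploit the set $S(f)=\{(\alpha,\beta)\in G^2\colon\alpha f=f\beta\}$ discussed in the introduction: since $\xi$ is a monoid homomorphism fixing $G$ pointwise, any $(\alpha,\beta)\in S(f)$ also satisfies $\alpha f'=f'\beta$. It therefore suffices to show that the pairs produced by Lemma~\ref{2.1} are numerous enough to pin down $f$ pointwise.

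Fix $x\in\mathbb{Q}$, set $y=f(x)$, and suppose for contradiction that $f'(x)=y''\neq y$. I plan to produce $(a,b)\in P$ with $b(x)=x$, $a(y)=y$, $y''\in\mathop{\mathrm{dom}}(a)$ and $a(y'')\neq y''$; Lemma~\ref{2.1} then supplies an extension $(\alpha,\beta)\in G^2$ with $\alpha f=f\beta$, whence $\alpha f'=f'\beta$ and
\[
\alpha(y'')=\alpha(f'(x))=f'(\beta(x))=f'(x)=y'',
\]
contradicting $\alpha(y'')\neq y''$.

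The construction of such $(a,b)$ proceeds by case analysis on the position of $y''$ relative to the decomposition $\mathbb{Q}=\bigcup\{A_q\colon q\in\mathbb{Q}_2\}$ (or its $\pm\infty$-decorated analogue) used in the definition of $\Gamma$; let $q_0$ denote the (necessarily red) index with $y\in A_{q_0}$. In every case I set $b(x)=x$ and $a(y)=y$. If $y''\in A_{q_0}\setminus\{y\}$ or $y''$ lies in some blue $\sim$-class $A_q$, I pick $y'''\neq y''$ of the same $\sim$-class on the correct side of $y$ and let $a(y'')=y'''$. If $y''$ lies in a red class $A_q$ with $q\neq q_0$, I additionally put the unique point $z\in A_q\cap\mathop{\mathrm{im}}(f)$ into $\mathop{\mathrm{dom}}(a)$ and $f^{-1}(z)$ into $\mathop{\mathrm{dom}}(b)$: when $y''=z$, I pick a further red index $q'\neq q$ on the same side of $q_0$ as $q$, send $y''$ via $a$ to the unique $\mathop{\mathrm{im}}(f)$-point $z'$ of $A_{q'}$, and let $b$ map $f^{-1}(z)$ to $f^{-1}(z')$; when $y''\neq z$, I let $a(y'')$ be another point of $A_q\setminus\{z\}$ and let $b$ fix $f^{-1}(z)$.

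The main obstacle will be the systematic verification of clauses~\eqref{clause:1}--\eqref{clause:7} in the subcase where $y''\in\mathop{\mathrm{im}}(f)$ lies in a red class different from $A_{q_0}$, because both $\mathop{\mathrm{dom}}(a)$ and $\mathop{\mathrm{dom}}(b)$ must be enlarged compatibly so that clauses~\eqref{clause:6}, \eqref{clause:7} survive, and order-preservation of $b$ has to be checked against the relative positions of $x$, $f^{-1}(z)$, $f^{-1}(z')$; the density of red $\sim$-classes between any two distinct classes (established just before the definition of $\Gamma$) is what guarantees that a suitable $q'$ exists. For $\Gamma^+$, $\Gamma^-$, $\Gamma^\pm$ nothing essential changes: if $y''$ lies in the greatest or least blue interval, $y'''$ is chosen there as well, so the final clause of~\eqref{clause:1} is satisfied automatically and Lemma~\ref{2.1} applies in exactly the same way.
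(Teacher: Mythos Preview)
Your plan is correct and follows essentially the same route as the paper. Both arguments use the set $S(f)$ and Lemma~\ref{2.1} to manufacture a pair $(\alpha,\beta)\in G^2$ with $\alpha f=f\beta$, $\beta(x)=x$, and $\alpha$ moving the putative wrong value; the paper phrases this as the characterization $g(u)=s\Leftrightarrow\forall(\alpha,\beta)\in S(g)\,(\beta(u)=u\rightarrow\alpha(s)=s)$ together with $S(g)=S(\xi(g))$, while you argue the same point by direct contradiction. Your case split (same red class as $y$ / blue class / different red class with $y''$ the image point / different red class with $y''$ not the image point) is a mild reorganisation of the paper's three cases, which fold your first and last cases into a single ``$s$ in a red interval, $s\notin\mathop{\mathrm{im}}(g)$'' case allowing $r=g(u)$; the constructions of $(a,b)$ agree in each instance, and your identification of the delicate subcase and of the required density of red classes matches the paper's treatment.
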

\begin{proof}
Recall the definition of
$S(g) =\{(\alpha,\beta)\in G^2\colon \alpha g = g\beta \}$ for $g\in E$.
Now let $g \in \Gamma$, and consider elements~$u$ and~$s$
of~$\mathbb{Q}$ with $s\neq g(u)$. We construct $(\alpha,\beta)$ in~$S(g)$ such that $\alpha(s) \neq s$ and $\beta(u) = u$. We consider
two cases:

 \begin{enumerate}
  \item If $s\in \mathop{\mathrm{im}}(g)$, then~$s$  and~$g(u)$ lie in different red intervals. Without loss of generality we suppose that $g(u) < s$. Since
  $\mathop{\mathrm{im}}(g) \cong \mathbb{Q}$, there is $t \in \mathop{\mathrm{im}}(g)$ greater than~$s$. Since~$g$ is order-reflecting (that is, its inverse preserves the order), $u < g^{-1}(s) < g^{-1}(t)$. Hence $a = \{(g(u), g(u)), (s, t)\}$ and $b = \{(u, u), ((g^{-1}(s), g^{-1}(t))\}$ are finite partial automorphisms.
We can verify that $(a, b) \in P$ (as defined before Lemma~\ref{2.1}).
  \item If $s \notin \mathop{\mathrm{im}}(g)$, then since $g(u) \neq s$, without loss of generality we suppose that $g(u) < s$. We consider two cases:
  \begin{enumerate}[(i)]
  \item\label{item:blue-interval}
  If~$s$ lies in a blue interval~$A_q$, we choose $t \neq s$ in the same interval.
  Since~$A_q$ is convex,
  $a = \{(s, t), (g(u), g(u))\}$ and $b = \{(u, u)\}$ are finite partial automorphisms. Again
 \mbox{$(a,b) \in P$}.
 \item If~$s$ lies in a red interval~$A_q$ containing $r \in \mathop{\mathrm{im}}(g)$, we choose $t \in A_q\setminus \{g(u),r,s\}$ on the same side of~$r$ (which also allows for the possibility that $r = g(u)$). Then $a = \{(g(u), g(u)),(r,r), (s, t)\}$ and
$b = \{(u, u), (g^{-1}(r), g^{-1}(r))\}$ are finite partial automorphisms, and once more we can verify that $(a,b)\in P$.
 \end{enumerate}
  \end{enumerate}

In each case we can extend $(a,b)$ to $(\alpha,\beta)$ such that $\alpha g = g \beta$ by appealing to Lemma~\ref{2.1}, thus
$(\alpha,\beta)$ lies in $S(g)$, and satisfies $\beta(u) = u$, $\alpha(s) = t \neq s$.

This means that for any~$u$ in~$\mathbb{Q}$ the element~$g(u)$ can be
recovered from~$S(g)$, namely as the unique value~$s$ in~$\mathbb{Q}$
satisfying either side of the equivalence

\begin{equation}\label{Eq:Recover}
g(u)=s\iff \forall (\alpha,\beta)\in S(g)~\left(\beta(u)=u\rightarrow \alpha(s)=s\right)
\end{equation}
For if $g(u)=s$ and $\left(\alpha,\beta\right) \in S(g)$ verifies $\beta(u)=u$, then
$\alpha(s)=\alpha(g(u))=g(\beta(u))=g(u)=s$. This implication is even
true for any~$g\in E$, not just for $g\in\Gamma$.
Conversely, if $g\in\Gamma$ and $g(u)\neq s$, then by the above we can construct $\left(\alpha,\beta\right)\in S(g)$ such that $\beta(u)=u$ and
$\alpha(s)\neq s$.

Note that since~$\xi$ is an injective homomorphism fixing~$G$ pointwise,
\begin{align*}
S(\xi(g)) &= \{(\alpha, \beta) \in G^2\colon \alpha \xi(g) = \xi(g) \beta\} = \{(\alpha, \beta) \in G^2\colon \xi (\alpha g) = \xi(g\beta)\}\\
          &= \{(\alpha, \beta) \in G^2\colon \alpha g = g\beta\} = S(g).
\end{align*}
From this and Condition~\eqref{Eq:Recover} we obtain $\xi(g)=g$: namely,
for $u\in\mathbb{Q}$ put $s:= \xi(g)(u)$, then all of the following
equivalent conditions hold:
\begin{align*}
 \forall (\alpha,\beta)\in S\left(\xi \left(g\right)\right)~
        \left(\beta(u)=u\rightarrow \alpha(s)=s\right)
 & \iff \\
 \forall (\alpha,\beta)\in S(g)~
        \left(\beta(u)=u\rightarrow \alpha(s)=s\right)
 &\stackrel{\text{\eqref{Eq:Recover}}}{\iff} g(u) = s.
\end{align*}

Similar proofs apply in the cases $g \in \Gamma^+$, $\Gamma^-$,
$\Gamma^\pm$. We just note for instance in the case of~$\Gamma^+$ that
if~$s$ lies in the greatest blue interval, then so does~$t$
(Case~\eqref{item:blue-interval}).
\end{proof}

Now we consider how the members of~$\Gamma$ and~$M$ interact. If $g \in
\Gamma$ and $f \in M$ where $\mathop{\mathrm{im}}(f)$ is `coterminal' (that is, for
every $x\in\mathbb{Q}$ there are $u,v\in\mathop{\mathrm{im}}(f)$ with $u\leq x\leq v$),
then any $\sim_{gf}$-class is a union of a convex family of $\sim_g$-classes. This is because $\mathop{\mathrm{im}}(gf) \subseteq \mathop{\mathrm{im}}(g)$
and so if $x \le y$, then
$x \sim_g y \Rightarrow |[x, y] \cap \mathop{\mathrm{im}}(g)| \le 1 \Rightarrow |[x, y] \cap \mathop{\mathrm{im}}(gf)| \le 1 \Rightarrow x \sim_{gf} y$.
Since all $\sim_g$-classes are isomorphic to~$\mathbb Q$, so are all the $\sim_{gf}$-classes. The family of red
$\sim_{gf}$-classes is ordered like~$\mathbb Q$, since it corresponds precisely to the image of~$gf$, which is a copy of~$\mathbb Q$. And
the blue $\sim_{gf}$-classes occupy some cuts among the red ones. Two distinct blue $\sim_{gf}$-classes must occupy distinct
cuts, as if they had no red $\sim_{gf}$-class between them, then by definition of~$\sim_{gf}$, they would have to be in the same
$\sim_{gf}$-class. This means that we may write~$\mathbb Q$ as a disjoint union of sets~$A_q$ for~$q$ lying in some subset~$Q$
of~$\mathbb{Q}_2$, where each~$A_q$ is isomorphic to~$\mathbb Q$ and all the red members of~$\mathbb{Q}_2$ lie in~$Q$. This
describes the general set-up. Depending on the particular~$g$ and~$f$, we may find that $gf \in \Gamma$ or not. We first see
that if they both lie in~$\Gamma$, then the product necessarily does too. Modified remarks apply in the cases where $\mathop{\mathrm{im}}(f)$ is
bounded above, or below, or both, in which case we use the appropriate class, $\Gamma^+$ or~$\Gamma^-$ or~$\Gamma^\pm$.

\begin{lemma}\label{2.3}
If~$g_1$ and~$g_2$ lie in\/~$\Gamma$ then so does~$g_2g_1$ (and similarly for\/~$\Gamma^+$, $\Gamma^-$,
$\Gamma^\pm$).
\end{lemma}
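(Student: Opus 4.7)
The plan is to build on the discussion immediately preceding the statement, which already establishes that, when $g_1,g_2\in\Gamma$, the set $\mathbb{Q}$ decomposes as a disjoint union of $\sim_{g_2 g_1}$-classes each isomorphic to $\mathbb{Q}$, indexed by a subset $Q\subseteq\mathbb{Q}_2$ containing every red element of $\mathbb{Q}_2$, with the red $\sim_{g_2 g_1}$-classes corresponding bijectively to the points of $\mathop{\mathrm{im}}(g_2 g_1)$ and hence already ordered like $\mathbb{Q}$. To conclude $g_2 g_1\in\Gamma$, it suffices to show that $Q$ with its induced colouring is isomorphic to $\mathbb{Q}_2$; since reds are automatically dense in $Q$, the remaining task is to verify that between any two red $\sim_{g_2 g_1}$-classes there lies a blue one, and the tiling can then be reindexed accordingly.

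For this I would fix two red $\sim_{g_2 g_1}$-classes $R_1<R_2$, denote their unique $\mathop{\mathrm{im}}(g_2 g_1)$-points by $c_1=g_2(a')<c_2=g_2(b')$ with $a',b'\in\mathop{\mathrm{im}}(g_1)$, and exploit $g_1\in\Gamma$ to select a blue $\sim_{g_1}$-class $B$ strictly between $a'$ and $b'$ in the class order on $\mathbb{Q}_2$. Picking any $z'\in B$ and setting $z:=g_2(z')$, the question becomes whether the $\sim_{g_2 g_1}$-class $C_z$ of $z$ is blue.

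The main step is to show precisely this. Suppose otherwise, so that there is $c\in C_z\cap\mathop{\mathrm{im}}(g_2 g_1)$ with $c\neq z$; without loss of generality $c>z$. Writing $c=g_2(c')$ with $c'\in\mathop{\mathrm{im}}(g_1)$ gives $c'>z'$, and since $z'\in B$ is blue, the $\sim_{g_1}$-class of $c'$ must be red and lie strictly above $B$ in the $\mathbb{Q}_2$-order. Red density inside $\mathbb{Q}_2$ then provides a red index strictly between that of $B$ and that of $c'$'s class, hence a point $c''\in\mathop{\mathrm{im}}(g_1)$ with $z'<c''<c'$; applying $g_2$ yields $g_2(c'')\in\mathop{\mathrm{im}}(g_2 g_1)$ strictly between $z$ and $c$. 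Because $(z,g_2(c''))\subseteq(z,c)$ and the latter contains at most one $\mathop{\mathrm{im}}(g_2 g_1)$-point (namely $c$, which is not in $(z,g_2(c''))$), we get $g_2(c'')\sim_{g_2 g_1}z$, so $C_z$ would contain two distinct $\mathop{\mathrm{im}}(g_2 g_1)$-points, contradicting the fact that every $\sim_{g_2 g_1}$-class meets $\mathop{\mathrm{im}}(g_2 g_1)$ in at most one point. Hence $C_z$ is blue, and convexity together with $c_1<z<c_2$ places $C_z$ strictly between $R_1$ and $R_2$ in the class order, as required.

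The cases $\Gamma^+$, $\Gamma^-$, $\Gamma^\pm$ are handled analogously, using $\mathbb{Q}_2\cup\{\infty\}$, $\mathbb{Q}_2\cup\{-\infty\}$, $\mathbb{Q}_2\cup\{\pm\infty\}$ respectively in place of $\mathbb{Q}_2$: the greatest and/or least blue class demanded by those definitions is supplied by the bounds on $\mathop{\mathrm{im}}(g_2 g_1)$ inherited from $g_1$, namely the rationals above $\sup\mathop{\mathrm{im}}(g_2 g_1)$ or below $\inf\mathop{\mathrm{im}}(g_2 g_1)$ form a single blue $\sim_{g_2 g_1}$-class playing that role. The main obstacle I anticipate is the blueness verification of $C_z$: the danger is that $C_z$ could absorb a nearby $\mathop{\mathrm{im}}(g_2 g_1)$-point and turn out to be red, and this is averted by the red density inside $\mathbb{Q}_2$ provided by the $\Gamma$-structure of $g_1$ arbitrarily close to $B$ on both sides.
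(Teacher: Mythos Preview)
Your argument is correct and follows the same overall architecture as the paper's proof: both pick a $\sim_{g_1}$-blue class between the two given $\mathop{\mathrm{im}}(g_2g_1)$-points, push it forward by~$g_2$, and argue that the $\sim_{g_2g_1}$-class containing this image is blue. The technical verification of blueness, however, is carried out differently. The paper identifies the blue $g_1$-interval as $(a,b)$ with irrational endpoints, writes $a=\sup_n g_1(a_n)$ and $b=\inf_n g_1(b_n)$ for monotone sequences in~$\mathop{\mathrm{im}}(g_1)$, and then uses these sequences to squeeze any putative image point in the containing $\sim_{g_2g_1}$-class into a contradiction. You instead argue directly from the density of red $\sim_{g_1}$-classes in the $\mathbb{Q}_2$-ordering: if the class $C_z$ were red with image point~$c=g_2(c')$, density yields another $c''\in\mathop{\mathrm{im}}(g_1)$ strictly between~$z'$ and~$c'$, and convexity of~$C_z$ forces $g_2(c'')\in C_z$, giving two image points in a single class. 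Your route is a bit more elementary in that it avoids any appeal to real endpoints or limits; the paper's route has the virtue of making explicit where the $g_2g_1$-blue interval sits relative to the original $g_2g_1(x),g_2g_1(y)$.

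One small wording slip: in the parenthetical ``namely~$c$, which is not in $(z,g_2(c''))$'' you treat~$c$ as the at-most-one image point of $(z,c)$, but $c\notin(z,c)$. The step still goes through, either because $(z,g_2(c''))\subseteq(z,c)$ inherits the one-point bound so $g_2(c'')\sim_{g_2g_1} z$, or more simply because~$C_z$ is convex and $z<g_2(c'')<c$ with $z,c\in C_z$. Either way you obtain two distinct image points $g_2(c'')\neq c$ in~$C_z$, which is the desired contradiction.
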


\begin{proof} From the above remarks, we just need to see that between any two $g_2g_1$-red intervals there is a $g_2g_1$-blue
one. Let $g_2g_1(x) < g_2g_1(y)$. Since $g_1 \in \Gamma$, there is a~$g_1$-blue interval $(a, b) \subseteq (g_1(x), g_1(y))$, and its
endpoints~$a$ and~$b$ are irrationals which are limits of points of $\mathop{\mathrm{im}}(g_1)$.
Let $a = \sup_{n\in\mathbb{N}} g_1(a_n)$, $b = \inf_{n\in\mathbb{N}} g_1(b_n)$ where
$(a_n)$ is an increasing sequence, and $(b_n)$ is a decreasing sequence. From $(a, b) \cap \mathop{\mathrm{im}}(g_1) = \emptyset$ it
follows that $g_2(a, b) \cap \mathop{\mathrm{im}}(g_2g_1) = \emptyset$. Let $(c, d)$ be the $g_2g_1$-interval containing $g_2(a, b)$. If
$c \leq  g_2g_1(a_n)$ for some $n\in\mathbb{N}$, then $c \leq g_2g_1(a_n) < g_2g_1(a_{n+1}) < g_2g_1(a_{n+2}) < d$ which would give more than one
point of $\mathop{\mathrm{im}}(g_2g_1)$ in $(c, d)$, contrary to its being a $g_2g_1$-interval. Similarly we cannot have
$g_2g_1(b_n) \leq d$ for any~$n$. Therefore, if $c < g_2g_1(z) < d$ for some $z\in\mathbb{Q}$, then $g_2g_1(a_n) < g_2g_1(z) < g_2g_1(b_n)$
for every~$n$. This implies that $g_1(a_n) < g_1(z) < g_1(b_n)$ for all $n\in\mathbb{N}$, hence $a < g_1(z) < b$, contrary to
$(a, b) \cap \mathop{\mathrm{im}}(g_1) = \emptyset$.
Consequently, $(c,d)\cap \mathop{\mathrm{im}}(g_2g_1) = \emptyset$, and $(c,d)$ is a $g_2g_1$-blue interval.
Furthermore, for~$t$ in $(c,d)$, we have $g_2g_1(a_n)<c<t$ for all~$n$, thus
$t\leq g_2g_1(x)$ would imply $g_1(x)>g_1(a_n)$ for every~$n$ and so $g_1(x) >a$.
This contradicts $(a,b)\subseteq (g_1(x),g_1(y))$, hence $g_2g_1(x) < t$.
Analogously, we can prove $t< g_2g_1(y)$, and therefore, $(c,d)\subseteq (g_2g_1(x),g_2g_1(y))$.

From this and the basic properties of~$\sim_{g_2g_1}$ observed earlier,
it easily follows that the family of $\sim_{g_2g_1}$-intervals is ordered
like~$\mathbb{Q}_2$.
\end{proof}

\begin{lemma}\label{2.4}
  For any $f \in M$ whose image is coterminal in~$\mathbb Q$, there is $g \in \Gamma$ such that
  $gf \in \Gamma$ (with similar statements for the other classes~$\Gamma^+$, $\Gamma^-$, $\Gamma^\pm$).
\end{lemma}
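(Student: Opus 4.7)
The plan is a tailored back-and-forth construction of $g$ that simultaneously forces both $\mathop{\mathrm{im}}(g)$ and $g(B)$ (where $B = \mathop{\mathrm{im}}(f)$) to carry the $\mathbb{Q}_2$-pattern of red/blue $\sim$-classes required for membership in $\Gamma$. I enumerate $\mathbb{Q} = \{q_n : n < \omega\}$ and build a chain of finite, order-preserving partial maps $g_0 \subseteq g_1 \subseteq \dots$ with $q_n \in \mathop{\mathrm{dom}}(g_n)$, setting $g := \bigcup_n g_n$.

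At every stage we maintain two families of pairwise disjoint open intervals with irrational endpoints in the codomain. For each pair $p < q$ in $\mathop{\mathrm{dom}}(g_n)$ we reserve an interval $I_{p,q}$ between $g_n(p)$ and $g_n(q)$ to be avoided by every future value of $g$, while $\mathop{\mathrm{im}}(g)$ is made to accumulate at both its endpoints. For each pair $b < b'$ in $\mathop{\mathrm{dom}}(g_n) \cap B$ we reserve an interval $J_{b,b'}$ between $g_n(b)$ and $g_n(b')$ to be avoided by every future $g(b'')$ with $b'' \in B$, while $g(B)$ is made to accumulate at both its endpoints. In the limit each $I_{p,q}$ becomes a blue $\sim_g$-equivalence class, supplying the blue class required between the red $\sim_g$-classes of $g(p)$ and $g(q)$ and giving $g \in \Gamma$; each $J_{b,b'}$ becomes a blue $\sim_{gf}$-equivalence class separating the red $\sim_{gf}$-classes of $g(b)$ and $g(b')$ --- the $g(B)$-accumulation at its endpoints being precisely what prevents the class from absorbing an adjacent point of $g(B)$ and turning red --- and giving $gf \in \Gamma$.

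At each step we (a) place $g_{n+1}(q_{n+1})$ in the (always nonempty) region of $\mathbb{Q}$ not blocked by the current reservations, (b) nominate fresh pairwise disjoint reservations with irrational endpoints for every newly formed pair, and (c) add finitely many further domain elements with images chosen close to the endpoints of the new reservations --- arbitrary elements of $\mathbb{Q}$ for $I_{p,q}$ and elements of $B$ for $J_{b,b'}$. The main obstacle is (c) for $J_{b,b'}$: one must find $B$-elements whose $g$-images can be made to approach the endpoints of $J_{b,b'}$. This is always possible because between any two $b < b'$ in $B$, the interval $(b, b')$ contains a point at which $B$ accumulates from both sides (as $B \cap (b, b')$ is a countable DLO with perfect $\mathbb{R}$-closure, giving two-sided $\mathbb{R}$-accumulation points at all but at most its two extremes), so we can pick $B$-elements converging from both sides to the designated target. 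The cases $\Gamma^+$, $\Gamma^-$, $\Gamma^\pm$ are handled identically, with one or two maximal/minimal reservations grafted onto the template to realise the required $\pm\infty$ endpoints of $\mathbb{Q}_2 \cup \{\pm\infty\}$.
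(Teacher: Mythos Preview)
Your strategy is a direct one-shot construction of~$g$; the paper instead goes via Lemma~2.3, first applying an \emph{arbitrary} $g_1\in\Gamma$ and then choosing $g_2\in\Gamma$ so that $g_2g_1f\in\Gamma$, with $g=g_2g_1$. The point of the detour is that once $g_1\in\Gamma$ has been applied, every $\sim_{g_1f}$-class is automatically a convex union of $\sim_{g_1}$-classes and hence isomorphic to~$\mathbb{Q}$; only the \emph{density} of blue classes may fail, and $g_2$ is then chosen to supply those by stretching each existing class into its own copy of~$\mathbb{Q}_2$. So the delicate requirement that all equivalence classes be order-isomorphic to~$\mathbb{Q}$ is handled structurally rather than by bookkeeping.

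Your construction handles the blue classes but does not, as written, control the red ones. For $g\in\Gamma$ one needs the $\sim_g$-class of each $g(p)$ to be a nondegenerate open interval, i.e.\ $\inf\{g(q):g(q)>g(p)\}$ must be strictly above $g(p)$ and irrational (and dually below). Nothing in steps~(a)--(c) prevents later image points from accumulating at $g(p)$: step~(a) forbids only the blue reservations $I$, and step~(c) pushes new image points towards the (irrational) endpoints of those reservations, which themselves get created ever closer to $g(p)$ as new neighbours appear between $g(p)$ and the old $I_{p,q}$. If either image points or blue intervals accumulate at $g(p)$, its red $\sim_g$-class degenerates to a set with an endpoint, and $g\notin\Gamma$. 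The same issue arises for the red $\sim_{gf}$-classes of each $g(b)$ with $b\in B$. To rescue the direct approach you would have to add a third family of reservations---open ``red'' neighbourhoods around each placed image point (and around each $g(b)$ for $b\in B$) with irrational endpoints, forbidden to all future image points (respectively, to all future $g(B)$-points), with accumulation arranged from \emph{outside}. Once that bookkeeping is added the argument should go through, but it becomes appreciably heavier than the paper's compositional route.
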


\begin{proof} It is no doubt possible to prove this directly, but it seems a little easier to go by way of the previous lemma.
We start by taking any $g_1 \in \Gamma$, and then we see that we can
describe~$g_1f$ fairly well. Then we take another
$g_2 \in \Gamma$, which will be chosen so that $g_2g_1f \in \Gamma$. Appealing to Lemma~\ref{2.3}, we may let $g = g_2g_1$ to
conclude the proof.

By the discussion above, there is a subset~$Q$ of~$\mathbb{Q}_2$ containing all the red points, such that
$\mathbb{Q} = \bigcup_{q \in Q}A_q$ where the~$A_q$ are copies of~$\mathbb Q$ such that $q < r$ in~$Q$ implies that
$A_q < A_r$ and if $q \in Q$ is red, then~$A_q$ is a $g_1f$-red interval, and if it is blue, then~$A_q$ is a $g_1f$-blue
interval. Let us also write $\mathbb{Q} = \bigcup_{q \in \mathbb{Q}_2}B_q$ where $B_q \cong \mathbb{Q}$ and
$q < r \Rightarrow B_q < B_r$, and we choose $g_2 \in \Gamma$ mapping~$A_q$ to~$B_q$ for each $q \in Q$. More precisely, for
this we let $B_q = \bigcup_{r \in \mathbb{Q}_2}B_{q,r}$ where $B_{q,r} \cong \mathbb{Q}$ and $r < s \Rightarrow B_{q,r} < B_{q,s}$ and ensure that if~$r$ is red, $|\mathop{\mathrm{im}}(g_2) \cap B_{q,r}| = 1$, and if~$r$ is blue,
$\mathop{\mathrm{im}}(g_2) \cap B_{q,r} = \emptyset$. From this we can see that $g_2 \in \Gamma$. Furthermore, each~$B_q$ for red~$q$ is a $g_2g_1f$-red interval, and for blue~$q$ is a $g_2g_1f$-blue interval. Hence also $g_2g_1f \in \Gamma$.
\end{proof}

\begin{corollary}\label{2.5}
  Any injective monoid homomorphism~$\xi\colon M \to E$ which fixes~$G$ pointwise also fixes every member of~$M$.
\end{corollary}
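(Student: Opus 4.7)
My plan is as follows. Given $f\in M$, I would partition into four cases according to whether $\mathop{\mathrm{im}}(f)$ is coterminal in $\mathbb{Q}$, bounded above only, bounded below only, or bounded on both sides. In each case I invoke the appropriate variant of Lemma~\ref{2.4} to produce an element $g$ belonging to one of $\Gamma$, $\Gamma^+$, $\Gamma^-$, $\Gamma^\pm$ such that $gf$ lies in the same class. The role of the four variants is precisely to cover the four possibilities for the boundedness of $\mathop{\mathrm{im}}(f)$; since $\mathop{\mathrm{im}}(gf)=g(\mathop{\mathrm{im}}(f))$ inherits that boundedness pattern, the matching class is always the correct one.

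Once such a $g$ is at hand, Lemma~\ref{2.2} applies both to $g$ and to $gf$, yielding $\xi(g)=g$ and $\xi(gf)=gf$. Because $\xi$ is a monoid homomorphism,
\[
g\circ \xi(f) \;=\; \xi(g)\circ \xi(f) \;=\; \xi(gf) \;=\; gf \;=\; g\circ f .
\]
Since $g\in M$ is injective as a function on $\mathbb{Q}$, left-cancellation of $g$ (pointwise on $\mathbb{Q}$) gives $\xi(f)=f$, which is the desired conclusion.

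The main obstacle has in fact been disposed of already by Lemmas~\ref{2.1}--\ref{2.4}: they construct a large, well-behaved subfamily $\Gamma\cup\Gamma^+\cup\Gamma^-\cup\Gamma^\pm$ of $M$ that $\xi$ fixes pointwise, and they show that any $f\in M$ can be left-multiplied into this subfamily by an element \emph{of} the subfamily. With this scaffolding-plus-absorption in place, the corollary reduces to the one-line cancellation argument above, and no further difficulty arises; in particular the fact that $\xi(f)$ a priori only sits in $E$ (not in $M$) is harmless, because the cancellation takes place at the level of functions $\mathbb{Q}\to\mathbb{Q}$ rather than inside a monoid with non-trivial two-sided cancellation.
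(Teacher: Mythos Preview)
Your proof is correct and follows essentially the same route as the paper: split into the four boundedness cases, apply Lemma~\ref{2.4} to obtain $g$ with both $g$ and $gf$ in the relevant $\Gamma$-class, invoke Lemma~\ref{2.2} to fix them, and then cancel $g$ on the left. Your explicit remark that cancellation works pointwise (since $g$ is injective as a map $\mathbb{Q}\to\mathbb{Q}$) even though $\xi(f)$ a priori lies only in~$E$ is exactly the content of the paper's forward reference to Lemma~\ref{3.2}.
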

\begin{proof}
Let $f \in M$. By Lemma~\ref{2.4}, if $\mathop{\mathrm{im}}(f)$ is coterminal, then there is $g \in \Gamma$ such that
$gf \in \Gamma$. By Lemma~\ref{2.2}, $\xi$ fixes~$g$ and~$gf$. Therefore
$g\xi(f) = \xi(g)\xi(f) = \xi(gf) = gf$. Since~$g$ is in~$\Gamma$ and
thus in~$M$, it is left cancellable (see Lemma~\ref{3.2} below), and hence $\xi(f) = f$. If $\mathop{\mathrm{im}}(f)$ is bounded above but not
below, we argue similarly using~$\Gamma^+$ in place of~$\Gamma$, and~$\Gamma^-$, $\Gamma^\pm$ correspond in a similar way to
the cases $\mathop{\mathrm{im}}(f)$ bounded below and not above, and bounded above and below, respectively.
\end{proof}

It clearly follows from this corollary that every injective endomorphism
of~$M$ fixing~$G$ pointwise is the identity on~$M$. This implies the
following theorem.

\begin{theorem}\label{2.6}
$M = \mathop{\mathrm{Emb}}(\mathbb{Q}, \le)$ has automatic homeomorphicity, meaning that any isomorphism
between~$M$ and a closed submonoid of the full transformation monoid on a countable set is a homeomorphism.
\end{theorem}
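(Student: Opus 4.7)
The strategy is to invoke the general reconstruction theorem of~\cite{Bodirsky}, which yields automatic homeomorphicity of a closed transformation monoid $M$ from three hypotheses: (a) automatic homeomorphicity of its unit group~$G$, (b) the density $M=\overline{G}$, and (c) that every injective monoid endomorphism of~$M$ fixing~$G$ pointwise is $\mathrm{id}_M$. All three are available here. Hypothesis~(a) for $G=\mathop{\mathrm{Aut}}(\mathbb{Q},<)$ is classical, following from the small-index property for oligomorphic groups and explicitly recorded in~\cite{Bodirsky}. Hypothesis~(b) is the standard back-and-forth observation (already noted in the introduction) that any self-embedding of $(\mathbb{Q},<)$ can be approximated arbitrarily well on finite sets by automorphisms. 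Hypothesis~(c) is exactly Corollary~\ref{2.5} specialised to the codomain $E\supseteq M$ followed by restriction to $\xi(M)\subseteq M$, which by the remark immediately preceding the theorem is valid for arbitrary injective endomorphisms of~$M$.

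To indicate why the three ingredients combine, let $\phi\colon M\to N$ be an isomorphism onto a closed submonoid $N\subseteq\mathop{\mathrm{Tr}}(\Omega)$ for some countable~$\Omega$. Since invertibility is definable in the monoid language, $\phi$ maps $G$ bijectively onto the unit group of~$N$, which is closed in $\mathop{\mathrm{Sym}}(\Omega)$; by~(a) the restriction $\phi|_G$ is a homeomorphism. The closed group $\phi(G)$ acts oligomorphically on~$\Omega$, and on a suitable orbit $\Delta\subseteq\Omega$ it is $G$-equivariantly conjugate via some bijection $\iota\colon\mathbb{Q}\to\Delta$ to the natural action of $G$ on~$\mathbb{Q}$. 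From $M=\overline{G}$ and closedness of~$N$ one obtains $N=\overline{\phi(G)}$, so $\Delta$ is $N$-invariant and $\psi(f):=\iota^{-1}\phi(f)|_{\Delta}\iota$ defines an injective monoid endomorphism $\psi\colon M\to M$ fixing $G$ pointwise. Corollary~\ref{2.5} forces $\psi=\mathrm{id}_M$, so $\phi(f)$ acts on $\Delta$ exactly as $f$ acts on~$\mathbb{Q}$. The values of $\phi(f)$ outside $\Delta$ are pinned down as limits of $\phi(\alpha)$ with $\alpha\in G$, and continuity of $\phi|_G$ upgrades to continuity of both $\phi$ and $\phi^{-1}$ on all of~$M$.

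The step I expect to require the most care is the verification that the chosen orbit $\Delta$ is $N$-invariant and that the behaviour of $\phi(f)$ off $\Delta$ is fully determined by its behaviour on~$\Delta$ together with $\phi|_G$; both rely decisively on the closedness of~$N$ and the density of $\phi(G)$ in~$N$. Exactly this bookkeeping is what the abstract apparatus in~\cite{Bodirsky} handles once Corollary~\ref{2.5} is supplied, so the substantive mathematical work on our side is already complete.
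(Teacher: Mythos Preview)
Your proposal is correct and follows essentially the same approach as the paper: reduce to Lemma~12 of~\cite{Bodirsky} by verifying (a) automatic homeomorphicity of $G$ (via the small index property and~\cite{Bodirsky}, Proposition~7), (b) density of $G$ in the closed monoid $M$, and (c) Corollary~\ref{2.5}. Your informal second-paragraph sketch of how the Bodirsky machinery operates is not quite the actual mechanism---which proceeds via Proposition~11 of~\cite{Bodirsky}, extending the homeomorphism $\phi|_G$ to a monoid isomorphism and homeomorphism $\overline{\phi|_G}\colon M\to\overline{\phi(G)}$ and then using Corollary~\ref{2.5} to identify this with $\phi$, rather than by selecting a single orbit $\Delta$ (in particular your line ``$N=\overline{\phi(G)}$'' presupposes the continuity you are trying to establish)---but since you explicitly defer the bookkeeping to~\cite{Bodirsky} this does not affect the validity of your reduction.
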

\begin{proof}
This follows from Corollary~\ref{2.5} and~\cite[Lemma~12, p.~13]{Bodirsky},
since~$G$ is dense in the closed monoid $M=\mathop{\mathrm{End}}(\mathbb{Q},<)$; for
by~\cite[Proposition~7, p.~8]{Bodirsky} we know that~$G$ has automatic
homeomorphicity, since it has the small index property~\cite{Truss} and
hence automatic continuity~\cite[3.6, p.~8]{Bodirsky}.
\end{proof}

\section{Preliminary results for the endomorphism monoid of $(\mathbb{Q}, \le)$}
\label{sect:3}

Let us now consider $(\mathbb{Q}, \le)$, and the associated four `natural' monoids, namely its endomorphisms
$\mathop{\mathrm{End}}(\mathbb{Q}, \le)$, embeddings $\mathop{\mathrm{Emb}}(\mathbb{Q},\le)$
(being the same as the injective endomorphisms due to the order being
linear), surjective endomorphisms $\mathop{\mathrm{Surj}}(\mathbb{Q}, \le)$, and
automorphisms \mbox{$\mathop{\mathrm{Aut}}(\mathbb{Q}, \le)$}. The embeddings and automorphisms are the same as for $(\mathbb{Q}, <)$, so we
continue to abbreviate these as~$M$ and~$G$ respectively. The others we write as~$E$ (for endomorphisms) and~$S$
(for `surjective') respectively. Since we want to see what we can `recover' from~$G$ as before, we first look at
which subsets of~$E$ are definable. We starting by showing that the
surjective endomorphisms coincide with the
endomorphisms having a right inverse.

\begin{lemma}\label{3.1}
Each map which is right inverse to some $f\in S$ belongs to~$M$.
In particular a member of~$E$ belongs to~$S$ if and only if it has a right inverse endomorphism.
Furthermore, the sets of right inverse endomorphisms of distinct members
of~$S$ are unequal.
\end{lemma}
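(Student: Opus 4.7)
The plan is to handle the three assertions in sequence, with all the work resting on the structure of the preimages of an element of $S$.

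First I would dispose of the claim that right inverses of elements of $S$ land in $M$. If $h \in E$ and $fh = \mathrm{id}_{\mathbb{Q}}$ for some $f \in S$, then from $h(x) = h(y)$ one gets $x = fh(x) = fh(y) = y$, so $h$ is injective. Being order-preserving and injective on a linear order forces $h$ to be strictly order-preserving, so $h \in \mathop{\mathrm{Emb}}(\mathbb{Q}, \leq) = M$. In particular, the ``if'' direction of the second assertion is also immediate: existence of any right inverse $h \in E$ yields $y = f(h(y))$ for all $y$, so $f \in S$.

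Next I would set up the structural picture needed for the remaining directions. Because $f \in E$ is order-preserving and $\leq$ is linear, each preimage $f^{-1}(y)$ is a non-empty convex subset of $\mathbb{Q}$ (non-empty since $f \in S$), and if $y_1 < y_2$ then every element of $f^{-1}(y_1)$ is strictly less than every element of $f^{-1}(y_2)$ (otherwise order-preservation would fail). Hence any function $h\colon \mathbb{Q} \to \mathbb{Q}$ selecting one representative $h(y) \in f^{-1}(y)$ from each preimage is automatically strictly order-preserving and satisfies $fh = \mathrm{id}_{\mathbb{Q}}$. This produces a right inverse in $M$, completing the ``only if'' direction of the second assertion. (No choice principle is needed since $\mathbb{Q}$ is countable.)

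For the third assertion, suppose $f_1, f_2 \in S$ have the same set of right inverses; I want to show $f_1 = f_2$. Fix an arbitrary $x \in \mathbb{Q}$ and apply the construction of the previous paragraph to $f_1$, but stipulate that the chosen representative in $f_1^{-1}(f_1(x))$ is $x$ itself (permissible since $x \in f_1^{-1}(f_1(x))$). This yields a right inverse $h$ of $f_1$ with $h(f_1(x)) = x$. By hypothesis $h$ is also a right inverse of $f_2$, so $f_2 h = \mathrm{id}_{\mathbb{Q}}$; evaluating at $f_1(x)$ gives $f_1(x) = f_2(h(f_1(x))) = f_2(x)$. Since $x$ was arbitrary, $f_1 = f_2$.

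There is no real obstacle: once the convex-and-separated structure of the preimages is noted, sections and point-forcing are both automatic, and the argument for distinctness reduces to a one-line evaluation. The only thing to take care of is being explicit that choosing $x$ as the representative of $f_1^{-1}(f_1(x))$ is compatible with being an order-preserving section, which follows immediately from the observed strict separation of the preimages.
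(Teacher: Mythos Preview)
Your argument tracks the paper's closely, but there is one small slip in the first assertion. The lemma claims that \emph{every} map $h\colon\mathbb{Q}\to\mathbb{Q}$ satisfying $fh=\mathrm{id}_{\mathbb{Q}}$ for some $f\in S$ lies in $M$; you open by assuming $h\in E$ and then invoke ``order-preserving and injective'' to obtain strict monotonicity. That hypothesis is not available. The paper instead derives strict monotonicity of $h$ directly from the monotonicity of $f$: if $x<y$ then $f(h(x))=x\not\geq y=f(h(y))$, so $h(x)\geq h(y)$ is impossible as $f$ preserves $\leq$, whence $h(x)<h(y)$. This repairs your argument with a one-line change and removes the need to assume $h\in E$ at the outset.

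Your handling of the second and third assertions is the same as the paper's. The paper states the third as the equivalence $f(x)=y\iff(\exists g\in E)(fg=\mathrm{id}_{\mathbb{Q}}\wedge g(y)=x)$, which is precisely your point-forcing section read in the other direction; your explicit discussion of the convex, strictly separated preimages is exactly what underlies the paper's brief remark that a right inverse sending $y$ to a prescribed $x\in f^{-1}(y)$ always exists.
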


\begin{proof}
Let $f \in S$ and suppose $g\colon\mathbb{Q}\to\mathbb{Q}$
satisfies $fg=\mathrm{id}_{\mathbb{Q}}$. We show that
$g\in\mathop{\mathrm{End}}(\mathbb{Q},<) = M$. For this consider
$x,y\in \mathbb{Q}$ such that $x<y$.
Then $f(g(x))=x\not\geq y = f(g(y))$, which implies that
$g(x)\not\geq g(y)$ since~$f$ is order-preserving. As the order is
linear, it follows that $g(x)<g(y)$.

As every $f\in S$ is surjective, it has a right inverse map, which
belongs to $M\subseteq E$ by the above. Moreover, if $f\in E$ has a
right inverse~$g$, then $fg=\mathrm{id}_{\mathbb{Q}}$ implies that~$f$ is surjective.

To prove the final remark, we observe how to `recover' (i.e.\ define) $f \in S$ from its family of right inverses. In fact
$f(x) = y \Leftrightarrow (\exists g\in E)(fg=\mathrm{id}_{\mathbb{Q}} \land g(y) = x)$.
For from the above, it is clear that if $f(x) = y$, there is a right
inverse map (and hence endomorphism) taking~$y$ to~$x$, which gives
`$\Rightarrow$'.
Conversely, if $g(y) = x$ for some right inverse~$g$ of~$f$, then $f(x) = fg(y) = y$.
\end{proof}

Let us write~$C$ for the family of constant maps, namely
$\{c_a\colon a \in \mathbb{Q}\}$, where $c_a(x) = a$ for all~$x$.
Thus $C \subseteq E$ (but of course $C \cap M = \emptyset = C\cap S$).
All the mentioned sets are indeed definable in~$E$.

\begin{lemma} \label{3.2}
Each of~$C$, $M$, $S$, $G$ is a definable subset of~$E$:
$C$ contains precisely all left absorbing (left zero) elements in~$E$,
$M$ are the monomorphisms, $S$ coincides with the epimorphisms, and~$G$ consists of the isomorphisms
inside~$E$.
\end{lemma}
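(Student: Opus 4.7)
For~$C$, the inclusion $C\subseteq\{c\in E\colon\forall f\in E\ cf=c\}$ is immediate from $c_a\cdot f=c_a$. Conversely, if $c\in E$ is left absorbing, then taking $f=c_a$ for any fixed $a\in\mathbb{Q}$ yields $c(x)=(c\cdot c_a)(x)=c(a)$ for every $x\in\mathbb{Q}$, so~$c$ is constant. This pins down~$C$ first-order in~$E$, after which I can freely exploit constants of~$E$ in the subsequent characterizations.

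For~$M$ as the monomorphisms of~$E$, injectivity obviously implies left cancellability. Conversely, if $f\in E$ satisfies $f(a)=f(b)$ with $a\neq b$, then $f\cdot c_a = c_{f(a)} = c_{f(b)} = f\cdot c_b$ while $c_a\neq c_b$, contradicting left cancellability. Hence the left cancellable elements of~$E$ coincide with the injective ones, i.e.\ with~$M$.

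The main step is showing that~$S$ equals the epimorphisms of~$E$. Surjectivity implies right cancellability pointwise. For the converse, given a non-surjective $f\in E$, fix $y_0\in\mathbb{Q}\setminus\mathop{\mathrm{im}}(f)$ and pick rationals $c<c'$, and plan to use the two step-maps
\[
g_1(x)=\begin{cases}c & x\le y_0\\ c' & x>y_0\end{cases}\qquad\text{and}\qquad
g_2(x)=\begin{cases}c & x<y_0\\ c' & x\ge y_0.\end{cases}
\]
Both are weakly order-preserving, hence lie in~$E$; they differ only at~$y_0$, which by choice is outside $\mathop{\mathrm{im}}(f)$, so $g_1f=g_2f$ although $g_1(y_0)=c\neq c'=g_2(y_0)$ shows $g_1\neq g_2$, refuting right cancellability. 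This step-function construction, which is the only place where the specific order-theoretic structure of~$\mathbb{Q}$ enters, is the main obstacle of the proof, but it is really quite mild. Finally, $G=M\cap S$, since a bijective weakly order-preserving self-map of a linearly ordered set automatically has an order-preserving inverse; definability of~$G$ thus follows from the previous two parts, or alternatively directly from the characterization $G=\{f\in E\colon\exists g\in E\ (fg=gf=\mathrm{id}_{\mathbb{Q}})\}$ as the group of units of~$E$.
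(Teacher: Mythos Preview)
Your proof is correct and follows essentially the same approach as the paper's: the treatments of~$C$, $M$, and~$G$ are identical, and for~$S$ both arguments construct two distinct endomorphisms agreeing off a single non-image point~$y_0$ to refute right cancellability. The only cosmetic difference is the choice of witnesses---you use two-valued step functions, while the paper uses the near-identity maps $g(x)=x$ for $x<y_0$, $g(x)=x+1$ for $x>y_0$ with $g(y_0)\in\{y_0,y_0+1\}$---and for the forward direction of~$S$ you argue directly from surjectivity rather than via the right inverse supplied by Lemma~\ref{3.1}.
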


\begin{proof} We have to show that $C = \{g \in E\colon (\forall f \in E)gf = g\}$. To see that this correctly defines~$C$,
first let
$a \in \mathbb{Q}$, and note that for any $f \in E$, $c_af = c_a$ since $c_af(x) = a = c_a(x)$ for all $x\in\mathbb{Q}$. Conversely, suppose that
$gf = g$ for all $f\in E$, and pick any $a \in \mathbb{Q}$. Then $gc_a = g$ and hence for any $x \in \mathbb{Q}$,
$g(x) = gc_a(x) = g(a)$, so~$g$ is constant.

We would like to characterize~$M$ as the members of~$E$ with left inverses, but this is incorrect, as one sees for instance by
considering the function $f(x) = x$ if $x < \pi$, $x + 1$ if $x > \pi$. If this had a left inverse~$g$ say, then for all~$a$
and~$b$ such that $a < \pi < b$, $f(a) < 4 < f(b)$, and so $a < g(4) < b$, which forces $g(4)$ to be~$\pi$ which is not
rational. Instead we use a related condition, of left cancellability (i.e.,
of being a `monomorphism'). So we shall show that a member~$f$ of~$E$ lies in~$M$ if
and only if for any~$g$ and~$h$ in~$E$, $fg = fh \Rightarrow g = h$. If $f \in M$ then this property holds, since for any~$g$
and~$h$ such that $fg(x) = fh(x)$ holds for any~$x$ in~$\mathbb{Q}$, we
have $g(x) = h(x)$ due to~$f$ being injective. Conversely, suppose that~$f$ is a monomorphism in~$E$. Whenever $x,y\in \mathbb{Q}$ are such that
$f(x)=f(y)$, then $fc_x = fc_y$ holds for the constant endomorphisms
$c_x,c_y\in E$. As~$f$ is left cancellable, this implies $c_x = c_y$, and
so $x=y$. Hence, $f$ is an injective endomorphism, thus it belongs to~$M$.

Clearly, by Lemma~\ref{3.1} the set~$S$ is definable as the collection of endomorphisms of $(\mathbb{Q},\leq)$ having a
right inverse endomorphism. However, we can also show that $f \in S$ if and only if it is right cancellable (so is an `epimorphism').
Certainly, if $f\in S$, then it is right cancellable because there is a
right inverse for~$f$.
Conversely, suppose that~$f$ is not surjective, and let~$y$ not lie in
its image. Let $g(x) = h(x) = x$ if $x < y$, $g(x) = h(x) = x + 1$ if $x > y$, and $g(y) = y$, $h(y) = y+1$. Then
$g, h \in E$, and they agree on $\mathbb{Q}\setminus\{y\}\supseteq \mathop{\mathrm{im}}(f)$, and hence $gf = hf$. However,
$g \neq h$, so~$f$ is not right
cancellable.

Finally, $G = M \cap S$, so it too is definable as the set of isomorphisms (i.e.\ the morphisms having two-sided inverses).
\end{proof}

\begin{lemma} \label{3.3} For any $h \in \mathop{\mathrm{End}}(\mathbb{Q}, \le)$ there are $f \in M$ and $g \in S$ such that $h = gf$.
\end{lemma}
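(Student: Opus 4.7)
The plan is to factor $h$ through the lexicographic product $\mathbb{Q}\times\mathbb{Q}$. This product, equipped with the lexicographic order~$<_{\mathrm{lex}}$, is a countable dense linear order without endpoints, hence order-isomorphic to $(\mathbb{Q},<)$; I fix once and for all such an isomorphism $\phi\colon (\mathbb{Q}\times\mathbb{Q},<_{\mathrm{lex}})\to(\mathbb{Q},<)$.

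With $\phi$ in hand, I would define $f\colon\mathbb{Q}\to\mathbb{Q}$ by $f(x):=\phi(h(x),x)$ and $g\colon\mathbb{Q}\to\mathbb{Q}$ by $g(y):=\pi_1(\phi^{-1}(y))$, where $\pi_1$ denotes the first projection $\mathbb{Q}\times\mathbb{Q}\to\mathbb{Q}$. Then there are four things to check in turn: (i)~$f\in M$, i.e.\ $f$ is strictly order-preserving --- for $x<y$, either $h(x)<h(y)$, in which case $(h(x),x)<_{\mathrm{lex}}(h(y),y)$ by the first coordinate, or $h(x)=h(y)$ and the lex-comparison on the second coordinate again gives $(h(x),x)<_{\mathrm{lex}}(h(y),y)$; (ii)~$g$ is order-preserving, since $\pi_1$ is monotone with respect to $<_{\mathrm{lex}}$ and $\phi^{-1}$ is an order-isomorphism; (iii)~$g\in S$, because for any $q\in\mathbb{Q}$ one has $g(\phi(q,0))=q$; and (iv)~$gf=h$, since $g(f(x))=\pi_1(\phi^{-1}(\phi(h(x),x)))=\pi_1(h(x),x)=h(x)$.

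The only real obstacle here is conceptual, namely spotting the lex-product trick: the first coordinate records the action of~$h$, while the second coordinate provides enough room inside each fibre of~$g$ to unfold the convex preimages $h^{-1}(q)$ into a strictly increasing sequence --- and the identity $x\mapsto x$ is the cheapest possible witness of that strictness. Once this is seen, every verification is a single line. A more hands-on alternative would be to first construct a surjective $g\in S$ all of whose fibres are copies of~$\mathbb Q$ and then select $f(x)\in g^{-1}(h(x))$ by back-and-forth using the universality of~$\mathbb Q$ among countable linear orders, but the lex presentation absorbs that construction into the single choice of~$\phi$.
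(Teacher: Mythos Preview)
Your proof is correct and follows essentially the same idea as the paper's: both factor $h$ through a lexicographic product identified with $\mathbb{Q}$, setting $f(x)=\phi(h(x),x)$ and $g$ equal to the first projection composed with $\phi^{-1}$. The only difference is cosmetic: the paper builds a tailored ordered set $X=\bigcup_q X_q$ with $X_q=\{q\}\times h^{-1}(\{q\})$ for $q\in\mathop{\mathrm{im}}(h)$ and $X_q=\{q\}$ otherwise, whereas you use the full product $\mathbb{Q}\times\mathbb{Q}$, which makes the verification that the intermediate order is a countable dense linear order without endpoints slightly more immediate.
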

\begin{proof} If $q \in\mathbb{Q}$ then $h^{-1}(\{q\})$ is a convex subset of~$\mathbb Q$, since $h(x_1) = h(x_2) = q$ and
$x_1 \le y \le x_2$ imply that $h(y) = q$. Let $X = \bigcup_{q \in \mathbb{Q}}X_q$ where~$X_q$ equals
$\{q\} \times h^{-1}(\{q\})$ if~$q$ lies in the image of~$h$, and is
$\{q\}$ otherwise. We order~$X$ lexicographically, i.e., each~$X_q$ is
ordered as subset of~$\mathbb Q$, and we put $x_1<x_2$ for
$x_1\in X_{q_1}$ and $x_2\in X_{q_2}$ if and only if $q_1 < q_2$.
Then~$X$ is countable densely linearly ordered without endpoints,
so there is an isomorphism $\theta\colon \mathbb{Q} \to X$. Let $\varphi\colon X \to \mathbb{Q}$ be given by $\varphi((q, y)) = q$ if
$h(y) = q$, and $\varphi(q) = q$ if~$q$ does not lie in the image of~$h$. Finally, let $f(x) = \theta^{-1}((h(x), x))$ and
$g = \varphi\theta$.

We verify the desired properties. To see that $f \in M$, let $x < y$. Then $h(x) \le h(y)$ so it follows that
$(h(x), x) < (h(y), y)$, so $\theta^{-1}(h(x), x) < \theta^{-1}(h(y), y)$. Also, since~$\theta$ and $\varphi$ are
order-preserving and surjective, so is~$g$. Finally, to see that $h = gf$, take any $x \in \mathbb{Q}$. Then
$q = h(x) \in \mathop{\mathrm{im}}(h)$, so $(h(x), x) \in X_q$ and $f(x) = \theta^{-1}((h(x), x))$, so
$gf(x) = \varphi\theta \theta^{-1}((h(x), x)) = \varphi((h(x), x)) = h(x)$.     \end{proof}

\begin{corollary} \label{3.4} Any monoid automorphism~$\xi$ of~$E$ which fixes~$G$ pointwise is the identity.  \end{corollary}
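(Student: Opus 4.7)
The plan is to bootstrap the fixing property from~$G$ up through~$M$ to~$S$, and finally to all of~$E$, leveraging the definability result Lemma~\ref{3.2} and the factorization result Lemma~\ref{3.3}.

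First I would observe that, since~$M$ is a definable subset of~$E$ by Lemma~\ref{3.2} and~$\xi$ is a monoid automorphism, $\xi$ restricts to a bijection of~$M$. The restriction $\xi|_{M}$ is then an injective monoid homomorphism $M\to E$ that fixes~$G$ pointwise, so Corollary~\ref{2.5} applies and yields $\xi(f)=f$ for every $f\in M$.

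Next I would show that $\xi$ fixes~$S$ pointwise. By Lemma~\ref{3.2} again, $\xi$ restricts to a bijection of~$S$. For $f\in S$ let $R(f)=\{g\in E\colon fg=\mathrm{id}_{\mathbb{Q}}\}$ be the set of its right inverse endomorphisms. Since $\mathrm{id}_{\mathbb{Q}}\in G$ is fixed by~$\xi$ and~$\xi$ is a bijective monoid homomorphism, the chain of equivalences $g\in R(f) \Leftrightarrow fg=\mathrm{id}_{\mathbb{Q}} \Leftrightarrow \xi(f)\xi(g)=\mathrm{id}_{\mathbb{Q}} \Leftrightarrow \xi(g)\in R(\xi(f))$ gives $\xi(R(f))=R(\xi(f))$. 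But Lemma~\ref{3.1} says $R(f)\subseteq M$, and by the previous step $\xi$ is the identity on~$M$, so $\xi(R(f))=R(f)$. Combining these gives $R(\xi(f))=R(f)$, and the final clause of Lemma~\ref{3.1}, asserting that distinct members of~$S$ have distinct sets of right inverses, forces $\xi(f)=f$.

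Finally, for an arbitrary $h\in E$, Lemma~\ref{3.3} supplies a factorization $h=gf$ with $f\in M$ and $g\in S$. Applying~$\xi$ and using the two pointwise-fixing results just established yields $\xi(h)=\xi(g)\xi(f)=gf=h$, completing the argument. There is no real obstacle here beyond assembling the preparation correctly; the one point that deserves care is ensuring at the very start that the definability of~$M$ and~$S$ in~$E$ really does allow~$\xi$ to be restricted, which is precisely the role played by Lemma~\ref{3.2}.
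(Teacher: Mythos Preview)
Your proof is correct and follows essentially the same route as the paper's own argument: definability of~$M$ (Lemma~\ref{3.2}) together with Corollary~\ref{2.5} gives $\xi\restriction_M=\mathrm{id}_M$; then the right-inverse characterisation from Lemma~\ref{3.1} forces $\xi\restriction_S=\mathrm{id}_S$; finally the $S\!\cdot\!M$ factorisation of Lemma~\ref{3.3} finishes. Your write-up simply spells out in detail what the paper compresses into a few lines.
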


\begin{proof} The key point here is that since by Lemma~\ref{3.2} $M$ is definable in~$E$ as the family of left cancellable
elements, $\xi$ must map~$M$ to itself, so we can appeal to Corollary~\ref{2.5} to deduce that it also fixes~$M$ pointwise.
The first part of Lemma~\ref{3.1} implies that~$\xi$ fixes~$S$ setwise, so
it follows from the contrapositive of the second part that it
fixes~$S$ pointwise. Now it is immediate from~Lemma~\ref{3.3} that~$\xi$ fixes
every member of~$E$.
\end{proof}

Note that we would really like this to hold for injective endomorphisms, and not just for automorphisms. This may be true, but
our proof does not show it at present; that is because for a possibly not
surjective~$\xi$, it is not clear that the defining
property of~$M$ inside~$E$ (namely left cancellability) carries over to its
image under~$\xi$. A more detailed analysis of the proof of Lemma~\ref{3.2}
however shows that the property does hold for injective endomorphisms~$\xi$
whose image contains at least one constant operation.

We conclude this section by showing the definability of some other concepts, related to what we have already done.

\begin{lemma} \label{3.5} The relation $f, g \in M \land \mathop{\mathrm{im}}(f)
\subseteq \mathop{\mathrm{im}}(g)$ is definable in the monoid~$E$. \end{lemma}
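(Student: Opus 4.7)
The plan is to express the relation using a single existential quantifier over the monoid, taking advantage of the fact that $M$ is already known to be definable inside $E$. Specifically, I would prove that, for $f,g \in E$, the conjunction $f,g\in M \land \mathop{\mathrm{im}}(f)\subseteq\mathop{\mathrm{im}}(g)$ is equivalent to the first-order condition
\[
  f \in M \land g \in M \land (\exists h\in E)(f = gh),
\]
where the predicate ``$\in M$'' is, by Lemma~\ref{3.2}, definable as left cancellability. Thus the whole formula is in the pure monoid signature of~$E$.

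For the ``$\Leftarrow$'' direction, if $f = gh$ for some $h\in E$, then immediately $\mathop{\mathrm{im}}(f) = g(\mathop{\mathrm{im}}(h)) \subseteq g(\mathbb{Q}) = \mathop{\mathrm{im}}(g)$, while the conditions $f,g\in M$ are assumed outright. For the ``$\Rightarrow$'' direction, assuming $f,g \in M$ and $\mathop{\mathrm{im}}(f)\subseteq\mathop{\mathrm{im}}(g)$, I would define $h\colon \mathbb{Q} \to \mathbb{Q}$ by $h(x) := g^{-1}(f(x))$. This is well defined because every $f(x)$ lies in $\mathop{\mathrm{im}}(g)$ and~$g$, being injective, has a well-defined partial inverse on its image. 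Since both~$f$ and $g^{-1}\restriction_{\mathop{\mathrm{im}}(g)}$ preserve the strict order, their composite~$h$ preserves~$\le$, so $h \in E$ (in fact $h\in M$), and by construction $gh = f$.

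There is no real obstacle here, as essentially all the work has already been done: the definability of~$M$ inside~$E$ provided by Lemma~\ref{3.2} is what turns the semantic equivalence above into a genuine first-order definition. The only point to be careful about is to use the existential quantifier over all of~$E$ rather than over~$M$, so that one does not have to re-verify $h\in M$ (although it happens to hold); this keeps the formula as short as possible and shows that the lemma is really a statement about the multiplicative structure of~$E$ together with the distinguished subset~$M$.
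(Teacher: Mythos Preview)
Your proposal is correct and follows essentially the same route as the paper: both use the formula $f,g\in M \land (\exists h)\,f=gh$, with the witness $h=g^{-1}\circ f$ constructed via injectivity of~$g$ and order-preservation of~$f$. The only cosmetic difference is that the paper quantifies $h$ over~$M$ while you quantify over~$E$ and then observe $h\in M$ anyway; since~$M$ is definable in~$E$ by Lemma~\ref{3.2}, both formulations are equivalent and equally valid.
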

\begin{proof} We already know that membership in~$M$ is definable. We can then define the given relation by
$(\exists h \in M)f = gh$. Clearly if this formula is true, then the image of~$f$ is contained in the image of~$g$. Conversely,
if $\mathop{\mathrm{im}}(f) \subseteq \mathop{\mathrm{im}}(g)$, we can define~$h$ by $h(q) = r \Leftrightarrow f(q) = g(r)$. This defines~$h$ since
$\mathop{\mathrm{im}}(f) \subseteq \mathop{\mathrm{im}}(g)$, and it is well-defined because~$g$
is 1--1. Finally, $h$ preserves the (strict) order since~$f$ does and~$g$ reflects it.
\end{proof}

This result may be used to give a `representation' of~$\mathbb Q$ inside~$M$,
namely we can characterize those members~$f$ of~$M$
whose image omits precisely one point of~$\mathbb Q$ by the formula
$f \in M\setminus G \land (\forall g\in M\setminus G (\mathop{\mathrm{im}}(f) \subseteq
\mathop{\mathrm{im}}(g) \to \mathop{\mathrm{im}}(g) \subseteq \mathop{\mathrm{im}}(f)))$, representing
that~$f$ has a maximal image among non-automorphisms. And of course we can also
characterize when two such maps `encode' the same point by saying that they have the same image.

We remark that in~$E$, by contrast, we already have the constant maps~$c_q$ available, so we have an immediate and direct way
of representing the points of~$\mathbb Q$ inside the monoid.

Finally in this section, we show how finite subsets of~$\mathbb Q$ can be represented in~$E$.

\begin{lemma}\label{3.6}
For any $f \in E$,
$\mathop{\mathrm{im}}(f) = \{q \in \mathbb{Q}\colon (\exists h\in E)fh = c_q\}$.
Hence $|\mathop{\mathrm{im}}(f)| = n \Leftrightarrow $ there are exactly~$n$
constants~$k$ such that $(\exists h\in E)fh = k$.
\end{lemma}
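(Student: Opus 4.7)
The proof plan is essentially a direct unwinding of what the constant endomorphisms do, so I expect no real obstacle; the only slight subtlety is remembering that the $c_p$ themselves lie in $E$, which Lemma~\ref{3.2} has already confirmed.

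First I would establish the set equality by two easy inclusions. For $\subseteq$, given $q\in\mathop{\mathrm{im}}(f)$, pick $p\in\mathbb{Q}$ with $f(p)=q$ and set $h := c_p$; since $c_p\in E$, this~$h$ is admissible, and for every $x\in\mathbb{Q}$ we compute $fh(x) = f(c_p(x)) = f(p) = q = c_q(x)$, so $fh = c_q$. For $\supseteq$, suppose $h\in E$ satisfies $fh = c_q$. Evaluating both sides at any rational~$x$ (say $x=0$) yields $f(h(x)) = q$, which witnesses $q\in\mathop{\mathrm{im}}(f)$.

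For the cardinality statement, I would invoke the canonical bijection $q\mapsto c_q$ between~$\mathbb{Q}$ and the set~$C$ of constant operations in~$E$. Under this bijection, the subset $\{q\in\mathbb{Q}\colon (\exists h\in E)\, fh = c_q\}$ corresponds exactly to $\{k\in C\colon (\exists h\in E)\, fh = k\}$. Since the first set equals $\mathop{\mathrm{im}}(f)$ by what was just shown, the two sets have the same cardinality, and in particular $|\mathop{\mathrm{im}}(f)|=n$ holds if and only if there are exactly $n$ constants~$k$ with $(\exists h\in E)\, fh = k$. This completes the proof.
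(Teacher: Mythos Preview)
Your proof is correct and follows essentially the same approach as the paper: take $h=c_p$ for a preimage~$p$ of~$q$ to get one inclusion, and observe that $fh=c_q$ forces $q\in\mathop{\mathrm{im}}(f)$ for the other. The paper phrases the converse via $\{q\}=\mathop{\mathrm{im}}(c_q)=\mathop{\mathrm{im}}(fh)\subseteq\mathop{\mathrm{im}}(f)$ rather than evaluating at a point, and leaves the cardinality deduction implicit, but these are cosmetic differences only.
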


\begin{proof}
If $q=f(r)$ for some $r\in\mathbb{Q}$, then $fc_r = c_q$, so we may
choose $h\in E$ as~$c_r$. Conversely, if $fh=c_q$ for some
$h\in E$, then $\{q\} = \mathop{\mathrm{im}}(c_q)=\mathop{\mathrm{im}}(fh)\subseteq
\mathop{\mathrm{im}}(f)$.
\end{proof}

We remark that the situation for these maps is radically different in the cases $n = 1$ and $n > 1$. For $n = 1$ there are
exactly $\aleph_0$ maps having image of that size, namely the constant maps~$c_q$. But if $n > 1$, for each~$B$ of size~$n$
there are $2^{\aleph_0}$ maps having image~$B$. For if $B = \{b_0, b_1, \dotsc, b_{n-1}\}$ then $f^{-1}(\{b_i\})$ are pairwise
disjoint intervals with endpoints $a_i, a_{i+1}$ say, $-\infty = a_0 < a_1 < \ldots < a_n = \infty$ (open or closed or
semi-open) and~$a_i$ may take any real value. All the same, these maps are quite easy to visualize, and will play an important
part in what follows.

\section{Automatic homeomorphicity of $\mathop{\mathrm{End}}(\mathbb{Q}, \le)$}
\label{sect:4}

In this section we give a discussion of the automatic homeomorphicity question for~$E$. Here, since~$G$ is not dense in~$E$,
we are obliged to use a more direct method, which may be of some
independent interest (and will also be used in section~\ref{sect:5}). In
the hypothesis of automatic homeomorphicity we are asked to consider an isomorphism~$\theta$ of~$E$ with a closed
submonoid~$E'$ of the full transformation monoid $\mathop{\mathrm{Tr}}(\Omega)$ on some countable set~$\Omega$, and show that it is a homeomorphism. This
$\theta$ may be viewed as a (faithful) monoid action of~$E$ on~$\Omega$ (which we write as a left action). Our strategy is
to try to demonstrate directly that~$\theta$ is a homeomorphism, by describing explicitly what it can be. To that end, let us
study the $G$-orbits of~$\theta$. If $X\subseteq \Omega$ is one such orbit, then for some $x \in X$, $X = \{\theta(g)(x)\colon g \in G\}$. By the
orbit-stabilizer theorem, the orbit is in natural 1--1 correspondence with the left cosets of the stabilizer
$G_x = \{g \in G\colon \theta(g)(x) = x\}$. Since $X \subseteq \Omega$, it is countable, and so $|G\mathbin{:} G_x|$ is countable. By the
small index property for~$G$~\cite{Truss}, $G_x = G_B$ for some finite $B \subseteq \mathbb{Q}$, and furthermore, this gives
rise to an identification of~$X$ with the set $[\mathbb{Q}]^n$ of the $n$-element subsets of~$\mathbb Q$ respecting the
action as follows: Let $a_{g(B)} = \theta(g)(x)$. Then
$a_{g_1(B)} = a_{g_2(B)} \Leftrightarrow \theta(g_1)(x) = \theta(g_2)(x)
\Leftrightarrow g_2^{-1}g_1 \in G_x \Leftrightarrow g_2^{-1}g_1 \in G_B
\Leftrightarrow g_1(B) = g_2(B)$. Since $[\mathbb{Q}]^n$, the set of $n$-element subsets of~$\mathbb Q$,
forms an orbit under the action of~$G$, this means that we may write~$X$ as
$\{a_{g(B)}\colon g \in G\} = \{a_C\colon C \in [\mathbb{Q}]^n\}$, and the action of~$\theta$ is given by $\theta(g)(a_C) = a_{g(C)}$.
Under these circumstances we say that this $G$-orbit has \emph{rank}~$n$.

The conclusion of the discussion in the previous paragraph is that~$\Omega$ may be written as the union of $G$-orbits, each
having finite rank, and~$\theta$ provides a natural action of~$G$ on each $G$-orbit. Let us write
$\Omega = \bigcup_{i \in I}\Omega_i$, where~$\Omega_i$ are the $G$-orbits, and let~$\Omega_i$ have rank~$n_i$, so that we may
write $\Omega_i = \{a^i_B\colon B \in [\mathbb{Q}]^{n_i}\}$. The action is therefore given by $\theta(g)(a^i_B) = a^i_{g(B)}$ for
each $i \in I$ and $B \in [\mathbb{Q}]^{n_i}$. What we now want to do is to show how this action extends to an action of~$E$,
first treating members of~$M$. To do this, we need to know that the
restriction $\theta\restriction_M\colon M\to M'$, where $M'=\theta(M)$,
is continuous. We could infer this from Theorem~\ref{2.6} once we knew
that~$M'$ is a closed submonoid of~$E'$. However, it turns out we first
need to prove continuity of the restriction before we can verify this
assumption, so using Theorem~\ref{2.6} does not seem to be the right way
to do it.

\begin{lemma}\label{4.0}
For an isomorphism $\theta\colon E\to E'$ to a closed submonoid
$E'\subseteq \mathop{\mathrm{Tr}}(\Omega)$ on a countable set~$\Omega$,
the monoid $M' = \theta(M)$ is closed in~$\mathop{\mathrm{Tr}}(\Omega)$
and the restriction $\theta\restriction_M\colon M\to M'$ is a
homeomorphism.
\end{lemma}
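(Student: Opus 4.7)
The plan is to exploit the $G$-orbit decomposition $\Omega=\bigcup_{i\in I}\Omega_i$ with $\Omega_i=\{a^i_B\colon B\in[\mathbb{Q}]^{n_i}\}$ described above, guess an extension of the $G$-action to all of~$M$, identify that guess with $\theta$ on~$M$ by appealing to Corollary~\ref{2.5}, and read off both claims from the resulting explicit formula. For $f\in M$, $i\in I$ and $B\in[\mathbb{Q}]^{n_i}$ set $\widetilde{f}(a^i_B):=a^i_{f(B)}$; since $f$ is injective and order-preserving, $f(B)\in[\mathbb{Q}]^{n_i}$, so $\widetilde{f}\colon\Omega\to\Omega$ is well-defined, and a direct check shows that $\widetilde{\cdot}\colon M\to\mathop{\mathrm{Tr}}(\Omega)$ is a monoid homomorphism satisfying $\widetilde{g}=\theta(g)$ for every $g\in G$.

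The decisive step, and the point where the closedness hypothesis on~$E'$ enters, is to verify $\widetilde{f}\in E'$ for every $f\in M$; this is the variant of Lemma~12 of~\cite{Bodirsky} foreshadowed in the introduction. Using homogeneity of $(\mathbb{Q},<)$, I would pick an increasing exhaustion $B_1\subseteq B_2\subseteq\cdots$ of~$\mathbb{Q}$ by finite sets together with automorphisms $g_n\in G$ satisfying $\mathord{g_n\restriction_{B_n}}=\mathord{f\restriction_{B_n}}$. For any $a^i_B\in\Omega$, as soon as $B\subseteq B_n$ we have $\theta(g_n)(a^i_B)=a^i_{g_n(B)}=a^i_{f(B)}=\widetilde{f}(a^i_B)$, so $\theta(g_n)\to\widetilde{f}$ pointwise and closedness of~$E'$ in $\mathop{\mathrm{Tr}}(\Omega)$ forces $\widetilde{f}\in E'$. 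Setting $\xi(f):=\theta^{-1}(\widetilde{f})$ then yields a monoid homomorphism $\xi\colon M\to E$ fixing~$G$ pointwise; since $\theta$ is faithful on~$G$, at least one orbit~$\Omega_i$ has rank $n_i\geq 1$, and from $\widetilde{f}(a^i_B)=a^i_{f(B)}$ on such an orbit one can, using order-preservation of~$f$, read off every value $f(q)$ for $q\in\mathbb{Q}$, so $\xi$ is injective. Corollary~\ref{2.5} now yields $\xi=\mathrm{id}_M$, hence $\theta(f)=\widetilde{f}$ for every $f\in M$.

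With this explicit description in hand, continuity of $\theta\restriction_M$ is immediate because $\theta(f)(a^i_B)$ depends only on $\mathord{f\restriction_B}$, and continuity of the inverse follows symmetrically, using that $f(q)$ can be recovered from~$\theta(f)$ on any fixed orbit of rank at least~$1$ together with the order on~$\mathbb{Q}$. For closedness of~$M'$, suppose $\theta(f_n)\to h$ in $\mathop{\mathrm{Tr}}(\Omega)$ with $f_n\in M$; for any orbit~$\Omega_i$ with $n_i\geq 1$ and any $B\in[\mathbb{Q}]^{n_i}$ we have $\theta(f_n)(a^i_B)=a^i_{f_n(B)}$, and discreteness of~$\Omega_i$ forces $f_n(B)$ to be eventually constant. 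Combined with order-preservation this produces, for every $q\in\mathbb{Q}$, a well-defined limit $f(q)\in\mathbb{Q}$; the resulting map $f\colon\mathbb{Q}\to\mathbb{Q}$ inherits injectivity and order-preservation, hence lies in~$M$, and the explicit formula gives $h=\widetilde{f}=\theta(f)\in M'$. The main obstacle throughout is the very first non-trivial step, namely placing $\widetilde{f}$ inside $E'$ without any prior continuity information on $\theta\restriction_M$; approximating~$f$ by members of~$G$ on finite sets and invoking closedness of~$E'$ is what makes it work.
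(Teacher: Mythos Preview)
Your proof is correct, and it shares the same core idea as the paper's argument: construct a monoid homomorphism $M\to E'$ agreeing with~$\theta$ on~$G$, compose with~$\theta^{-1}$ to obtain an injective $\xi\colon M\to E$ fixing~$G$, and invoke Corollary~\ref{2.5} to conclude $\theta\restriction_M$ equals that constructed map. Where the two proofs diverge is in how the candidate map is built and how the conclusions are extracted.

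The paper works abstractly: it observes that $\theta(G)=G'$ is the group of units of~$E'$, uses automatic homeomorphicity of~$G$ to see that $\theta\restriction_G$ is a homeomorphism onto~$G'$, and then invokes Proposition~11 of~\cite{Bodirsky} to extend $\theta\restriction_G$ to a monoid isomorphism and homeomorphism $\overline{\theta\restriction_G}\colon M\to\overline{G'}$. Once Corollary~\ref{2.5} identifies this with $\theta\restriction_M$, both closedness ($M'=\overline{G'}$) and the homeomorphism property come for free. Your route instead exploits the concrete $G$-orbit description of~$\Omega$ already set up in the text, writes down the formula $\widetilde{f}(a^i_B)=a^i_{f(B)}$, and pushes $\widetilde f$ into~$E'$ by approximating~$f$ by automorphisms on finite sets and using closedness of~$E'$; continuity, openness and closedness of~$M'$ are then checked by hand from the formula. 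The trade-off is that your argument is self-contained (no appeal to automatic homeomorphicity of~$G$ or to Proposition~11 of~\cite{Bodirsky}) and simultaneously establishes the content of Lemma~\ref{4.1}, at the price of the explicit verifications at the end; the paper's version is shorter and more modular, and separates the abstract step (Lemma~\ref{4.0}) from the computation of the action (Lemma~\ref{4.1}).
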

\begin{proof}
This is an almost verbatim copy of the proof of Lemma~12
in~\cite{Bodirsky}, but with the ending modified as we are in a slightly
different situation.

Let us denote by~$G'$ the monoid reduct of the group of invertible
elements of~$E'$, and let~$\overline{G'}$ be the closure of~$G'$ in~$E'$;
this is again a transformation monoid, and~$G'$ is dense in it. We also know that~$G$
comprises the set of invertible elements of~$E$, and it is dense in the
closed monoid~$M$. It is easy to see that $\theta(G)\subseteq G'$
as~$\theta$ is a monoid homomorphism. Moreover, since~$\theta$ is an
isomorphism, $\theta^{-1}(G')\subseteq G$ follows by a symmetric
argument, and hence $\theta(G)=G'$ so that the restriction
$\theta\restriction_G\colon G \to G'$ is a well-defined bijective monoid
homomorphism. As the monoids~$G$ and~$G'$ are group reducts,
$\theta\restriction_G$ actually is a group isomorphism, too. Moreover,
density of~$G'$ in the closed monoid~$\overline{G'}$ implies that
$G'= \overline{G'}\cap \mathop{\mathrm{Sym}}(\Omega)$, and similarly~$G$
is a closed subgroup of the full symmetric group on~$\mathbb{Q}$.
As the automorphism group~$G$ has automatic homeomorphicity,
$\theta\restriction_G$ is a homeomorphism. Now applying Proposition~11
of~\cite{Bodirsky}, there is an extension
$\overline{\theta\restriction_G}\colon M\to \overline{G'}$
of~$\theta\restriction_G$, which is a monoid isomorphism and a
homeomorphism. As~$E'$ is closed, $\overline{G'}\subseteq E'$, and we let $\iota\colon \overline{G'}\to E'$ be the
inclusion map, which is a monoid embedding. Then $\xi:= \theta^{-1}\iota\overline{\theta\restriction_G}$
is an injective monoid homomorphism from~$M$ into~$E$, which fixes every
member of~$G$. By Corollary~\ref{2.5},
$\xi(f) = f$ for every $f\in M$, i.e.\
$\theta(f) = \theta(\xi(f)) = \iota(\overline{\theta\restriction_G}(f))
                            = \overline{\theta\restriction_G}(f)$.
This proves that
$M'= \theta(M) = \overline{\theta\restriction_G}(M) = \overline{G'}$,
and hence~$M'$ is closed in~$\mathop{\mathrm{Tr}}(\Omega)$. Moreover,
$\theta\restriction_M\colon M\to M'$ coincides
with~$\overline{\theta\restriction_G}$ and consequently is a
homeomorphism.
\end{proof}

\begin{lemma}\label{4.1}
For each $f \in M$, $i \in I$, and
$a^i_B \in \Omega$, $\theta(f)(a^i_B) = a^i_{f(B)}$.
\end{lemma}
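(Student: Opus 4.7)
The plan is to exploit continuity of $\theta\restriction_M$ (Lemma~\ref{4.0}) together with the density of~$G$ in~$M$ to reduce the statement for $f\in M$ to the already known formula $\theta(g)(a^i_B) = a^i_{g(B)}$ valid for $g\in G$. The identity we are after is an equation in the codomain $\mathop{\mathrm{Tr}}(\Omega)$, which carries the topology of pointwise convergence; hence it suffices to prove it one orbit element at a time, by approximating~$f$ by automorphisms that agree with~$f$ on the finite set~$B$.

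Concretely, I would fix $f\in M$, $i\in I$ and $B\in[\mathbb{Q}]^{n_i}$, and use the extension property of $\mathop{\mathrm{Aut}}(\mathbb{Q},<)$: there exists a sequence $(g_n)_{n\in\mathbb{N}}$ in~$G$ with $g_n\restriction_B = f\restriction_B$ for every~$n$ and such that $g_n\to f$ in~$M$ (indeed one may simply pick, for each finite $B\subseteq\mathbb{Q}$, some automorphism extending $f\restriction_B$; for $B$ growing to exhaust~$\mathbb{Q}$ the resulting sequence converges to~$f$). Since $f\in M$ is injective, $f(B)$ has cardinality~$n_i$, so $a^i_{f(B)}\in\Omega_i$ is well defined. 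Moreover, $g_n(B)=f(B)$ for every~$n$, so the prescribed action of~$G$ gives
\begin{equation*}
\theta(g_n)(a^i_B) \;=\; a^i_{g_n(B)} \;=\; a^i_{f(B)}.
\end{equation*}
By Lemma~\ref{4.0}, $\theta\restriction_M$ is a homeomorphism onto~$M'\subseteq\mathop{\mathrm{Tr}}(\Omega)$; in particular $\theta(g_n)\to\theta(f)$ in the topology of pointwise convergence on~$\Omega$. Evaluating at $a^i_B$ and noting that the constant sequence $a^i_{f(B)}$ converges to itself yields $\theta(f)(a^i_B)=a^i_{f(B)}$, as claimed.

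The argument is short because all the real work sits in Lemma~\ref{4.0}: once we have continuity of $\theta\restriction_M$, the equation on orbit elements follows mechanically from density of~$G$ and the fact that convergence in $\mathop{\mathrm{Tr}}(\Omega)$ is pointwise. The only mild subtlety worth flagging—and the part I would write out explicitly—is the choice of the approximating sequence $(g_n)$: one must ensure simultaneously that each $g_n$ agrees with~$f$ on~$B$ (so the right-hand side is literally constant, not merely eventually constant) and that $g_n\to f$ in~$M$. Both are standard consequences of the homogeneity of $(\mathbb{Q},<)$ and the fact that every finite partial embedding extends to an automorphism, so no genuine obstacle arises.
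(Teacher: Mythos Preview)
Your proof is correct and follows essentially the same approach as the paper: both use Lemma~\ref{4.0} for continuity of $\theta\restriction_M$, density of~$G$ in~$M$, and the known formula $\theta(g)(a^i_B)=a^i_{g(B)}$ for $g\in G$, then pass to the limit pointwise. The only cosmetic difference is that the paper takes an arbitrary sequence $g_n\to f$ and argues that $g_n(B)=f(B)$ holds \emph{eventually}, whereas you build the approximating sequence so that $g_n\restriction_B=f\restriction_B$ holds for \emph{all}~$n$; this is a minor streamlining, not a different route.
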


\begin{proof} As~$G$ is dense in~$M$, we may find a sequence $(g_n)$ in~$G$ such that $g_n \to f$. Now the topologies on~$M$
and~$M'$ are generated by sub-basic open sets of the form $\mathcal{B}_{qr} = \{h \in M\colon h(q) = r\}$ for
$q, r \in \mathbb{Q}$ and $\mathcal{C}_{ijBC} = \{h \in \theta(M)\colon h(a^i_B) = a^j_C\}$ for
$i, j \in I$, $B\in [\mathbb{Q}]^{n_i}$ and $C\in[\mathbb{Q}]^{n_j}$. Let $B = \{q_1, \ldots, q_m\}$ and $r_k = f(q_k)$. Since $g_n \to f$
and $f \in \mathcal{B}_{q_kr_k}$, there is~$N_k$ such that $(\forall n \ge N_k)g_n \in \mathcal{B}_{q_kr_k}$, so for all
$n \ge \max_{1 \le k \le m}N_k$, $g_n(B) = f(B)$. By Lemma~\ref{4.0}, the
restriction of~$\theta$ to~$M$ is continuous.
Hence $\theta(g_n) \to \theta(f)$. Let $\theta(f)(a^i_B) = a^j_C$. Thus
$\theta(f) \in \mathcal{C}_{ijBC}$. From $\theta(g_n) \to \theta(f)$ it follows that
$(\exists N)(\forall n \ge N)\theta(g_n) \in \mathcal{C}_{ijBC}$. Hence for this~$N$,
$(\forall n \ge N)\theta(g_n)(a^i_B) = a^j_C$. But we know that $\theta(g_n)(a^i_B) = a^i_{g_n(B)}$ as $g_n \in G$. Hence for
such~$n$, $j = i$ and $g_n(B) = C$. Taking $n \ge N, \max_{1 \le k \le m}N_k$, it follows that $j = i$ and $C = g_n(B) = f(B)$. Thus
$\theta(f)(a^i_B) = a^i_{f(B)}$ as required. \end{proof}

We can extend the statement of Lemma~\ref{4.1} to certain members of~$E$, provided that they act `like' members of~$M$ on the relevant set.

\begin{lemma}\label{4.2}
If $f \in E$, $i \in I$, and
$a^i_B\in\Omega_i$, where $|f(B)| = n_i = |B|$, then
$\theta(f)(a^i_B) = a^i_{f(B)}$.
\end{lemma}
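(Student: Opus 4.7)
The plan is to factor $f$ using Lemma~\ref{3.3} as $f = sj$ with $j\in M$ and $s\in S$, reducing the problem to understanding the surjective factor. Setting $D := j(B)$, injectivity of~$j$ gives $|D| = |B| = n_i$, and the hypothesis $|f(B)| = n_i$ forces the restriction $s|_D$ to be injective as well. Lemma~\ref{4.1} will then immediately supply $\theta(j)(a^i_B) = a^i_D$, reducing the target statement to proving $\theta(s)(a^i_D) = a^i_{s(D)}$.

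The crucial step will be to produce a right inverse $r\in M$ of~$s$ that agrees with $(s|_D)^{-1}$ on the finite set $s(D)$. By Lemma~\ref{3.1} every right inverse of~$s$ automatically lies in~$M$, so I only need a set-theoretic selector $r\colon \mathbb{Q}\to\mathbb{Q}$ with $r(y)\in s^{-1}(\{y\})$ for each~$y$, pinned to the value $(s|_D)^{-1}(y)$ whenever $y\in s(D)$. Such a selector exists because~$s$ is surjective (all fibres non-empty) and order-preserving: the fibres $s^{-1}(\{y\})$ are pairwise disjoint convex subsets of~$\mathbb{Q}$ arranged amongst themselves in the same order as their labels~$y$, so any selection automatically yields a strictly order-preserving function, and the prescribed finite constraint on $s(D)$ causes no clash.

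With~$r$ in hand, the identity $sr = \mathrm{id}_{\mathbb{Q}}$ gives $\theta(s)\theta(r) = \mathrm{id}_{\Omega}$. Applying Lemma~\ref{4.1} to $r\in M$ yields $\theta(r)(a^i_{s(D)}) = a^i_{r(s(D))} = a^i_D$, using the tailored behaviour of~$r$ on $s(D)$. Applying $\theta(s)$ to both sides then delivers $\theta(s)(a^i_D) = a^i_{s(D)}$, which combined with the reduction from the first paragraph gives $\theta(f)(a^i_B) = \theta(s)(a^i_D) = a^i_{s(D)} = a^i_{f(B)}$. The only delicate point will be the simultaneous requirement that~$r$ invert~$s$ on $s(D)$ and remain a global embedding; the fibre-structure observation above addresses exactly this, so I do not foresee a serious obstacle in executing the plan.
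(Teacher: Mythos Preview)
Your proposal is correct and follows essentially the same route as the paper: factor $f$ via Lemma~\ref{3.3} into an embedding and a surjection, handle the embedding factor with Lemma~\ref{4.1}, and treat the surjective factor~$s$ by choosing a right inverse in~$M$ (via Lemma~\ref{3.1}) that undoes~$s$ on the relevant finite set. The paper merely presents these steps in the opposite order (first the special case $f\in S$, then the general factorisation), and asserts the existence of the tailored right inverse without spelling out the fibre argument you give.
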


\begin{proof}
First consider the case where $f \in S$. As in the proof of
Lemma~\ref{3.1} there is a right inverse $h \in M$
for~$f$, and in addition, $h$ may be chosen so that for each $x \in B$,
$hf(x) = x$. Then, applying Lemma~\ref{4.1} to~$h\in M$,
$\theta(f)(a^i_B) = \theta(f)(a^i_{hf(B)}) = \theta(f)\theta(h)(a^i_{f(B)}) =\theta(\mathrm{id}_{\mathbb{Q}})(a^i_{f(B)}) = a^i_{f(B)}$.
Now consider any $h \in E$ such that $|h(B)| = n_i$.
By Lemma~\ref{3.3}, we may write $h = gf$ where $f \in M$ and $g \in S$,
and $|g(f(B))| = |h(B)| = n_i$. Hence
by what we have just shown, $\theta(g)(a^i_{f(B)}) = a^i_{gf(B)}$, so,
by Lemma~\ref{4.1} applied to $f\in M$,
$\theta(h)(a^i_B) = \theta(g)\theta(f)(a^i_B) = \theta(g)(a^i_{f(B)}) =
a^i_{gf(B)} = a^i_{h(B)}$.
\end{proof}

If $f \in E$ `collapses' a set~$B$, then we can certainly not deduce that $\theta(f)(a^i_B) = a^j_C$ for $j = i$,
since~$\Omega_i$ and~$\Omega_j$ will have different ranks. For the proof of openness in the main theorem, we would still need some
information about~$C$, namely that it is contained in~$f(B)$.

\begin{lemma}\label{4.3}
Let $i\in I$ and $B\in[\mathbb{Q}]^{n_i}$.
Then the following statements hold.
\begin{enumerate}[{\upshape(i)}]
\item\label{4.3.1}
  There is an idempotent endomorphism $h\in E$ having~$B$ as image
  such that $\theta(h)(a^i_B) = a^i_B$.
\item\label{4.3.2}
  $\theta(f_1)(a^i_B) = \theta(f_2)(a^i_B)$ whenever $f_1,f_2\in E$
  satisfy $f_1\restriction_B = f_2\restriction_B$.
\item\label{4.3.3}
  If, for $f\in E$, $j\in I$ and $C\subseteq\mathbb{Q}$ are given by
  $\theta(f)(a^i_B)=a^j_C$, then $C\subseteq f(B)$.
\end{enumerate}
\end{lemma}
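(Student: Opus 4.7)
The plan is to build in part (i) an explicit idempotent $h$ with image $B$ and then use it as a ``probe'' in parts (ii) and (iii).

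For part (i), I would write $B = \{b_1 < b_2 < \cdots < b_{n_i}\}$ and define $h\colon\mathbb{Q}\to\mathbb{Q}$ by the step function $h(x) = b_k$ whenever $b_k \leq x < b_{k+1}$ (setting $h(x) = b_1$ for $x < b_1$ and $h(x) = b_{n_i}$ for $x \geq b_{n_i}$). This $h$ is weakly order-preserving, hence lies in~$E$; its image is exactly~$B$; and it fixes~$B$ pointwise, so $h\circ h=h$. Since $|h(B)|=|B|=n_i$, Lemma~\ref{4.2} applies directly and yields $\theta(h)(a^i_B)=a^i_{h(B)}=a^i_B$.

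For part (ii), since the~$h$ from part (i) has image~$B$, the hypothesis $f_1\restriction_B=f_2\restriction_B$ upgrades to the global equality $f_1 h = f_2 h$ on all of~$\mathbb{Q}$. Applying the monoid homomorphism~$\theta$, evaluating at~$a^i_B$, and substituting $\theta(h)(a^i_B) = a^i_B$ from part (i) gives
\[
  \theta(f_1)(a^i_B) = \theta(f_1 h)(a^i_B) = \theta(f_2 h)(a^i_B) = \theta(f_2)(a^i_B).
\]

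Part (iii) is the main obstacle, and I would handle it in two stages: first, show that every $g\in G$ fixing~$f(B)$ pointwise must fix~$C$ setwise; second, use ultrahomogeneity of $(\mathbb{Q},<)$ to conclude $C\subseteq f(B)$. For the first stage, if $g\in G$ fixes~$f(B)$ pointwise then $gf$ and~$f$ agree on~$B$, so part (ii) together with the formula $\theta(g)(a^j_C) = a^j_{g(C)}$ (from the description of the $G$-action on~$\Omega_j$) yields
\[
  a^j_{g(C)} = \theta(g)\theta(f)(a^i_B) = \theta(gf)(a^i_B) = \theta(f)(a^i_B) = a^j_C,
\]
whence $g(C)=C$. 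For the second stage, suppose for contradiction that $c\in C\setminus f(B)$. Then~$c$ lies in some maximal convex component~$I$ of~$\mathbb{Q}\setminus f(B)$, and $I$ is order-isomorphic to~$\mathbb{Q}$ and thus infinite. Since~$C$ is finite, I would pick $d\in I\setminus C$ with $d\neq c$ and use homogeneity of~$(\mathbb{Q},<)$ to construct $g\in G$ fixing~$f(B)$ pointwise with $g(c)=d$. Then $d\in g(C)\setminus C$, contradicting $g(C)=C$; hence $C\subseteq f(B)$.
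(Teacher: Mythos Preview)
Your proof is correct and follows essentially the same approach as the paper: the construction of the idempotent~$h$ in~(i), the reduction $f_1h=f_2h$ in~(ii), and the contradiction in~(iii) via an automorphism fixing~$f(B)$ pointwise but moving a hypothetical $c\in C\setminus f(B)$ outside~$C$ are all exactly what the paper does. Your part~(iii) is organized as two separate stages and spells out the homogeneity argument more carefully, whereas the paper folds the ``$g(C)=C$'' computation directly into the contradiction, but the underlying argument is the same.
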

\begin{proof}
\begin{enumerate}[(i)]
\item
By subdividing~$\mathbb{Q}$ into~$|B|$ pairwise disjoint intervals each
containing a unique member of~$B$, and mapping the whole
of each such interval to the member of~$B$ it contains, we obtain an
endomorphism $h\in E$ fixing all elements of~$B$ and satisfying
$\mathop{\mathrm{im}}(h) = B$, which is clearly idempotent. Since $h(B) = B \in [\mathbb{Q}]^{n_i}$, we can
apply Lemma~\ref{4.2} to get $\theta(h)(a^i_B) = a^i_{h(B)} = a^i_B$.
\item
Consider the idempotent $h\in E$ constructed in~\eqref{4.3.1}.
We see by inspection that $f_1h = f_2h$, wherefore
$\theta(f_1)(a^i_B) = \theta(f_1)\theta(h)(a^i_B) = \theta(f_1h)(a^i_B)
=\theta(f_2h)(a^i_B)= \theta(f_2)\theta(h)(a^i_B) = \theta(f_2)(a^i_B)$.
\item
Now suppose for a contradiction that there is $c \in C \setminus f(B)$. Then
there is $h \in G$ moving~$c$ to $h(c)\notin C$ but fixing
all members of~$f(B)$. Hence $f \restriction_B = f' \restriction_B$,
where $f' = hf$, since~$h$ fixes~$f(B)$ pointwise.
As shown in~\eqref{4.3.2},
$\theta(f')(a^i_B) = \theta(f)(a^i_B) = a^j_C$. However,
$\theta(f')(a^i_B) = \theta(hf)(a^i_B) = \theta(h)\theta(f)(a^i_B) =
\theta(h)(a^j_C) = a^j_{h(C)}$, contrary to $h(C) \neq C$.
We conclude that $C \subseteq f(B)$ as required.
\qedhere
\end{enumerate}
\end{proof}

Using the ideas from above, we can demonstrate automatic homeomorphicity
of $E = \mathop{\mathrm{End}}(\mathbb{Q}, \le)$.

\begin{theorem}\label{4.4}
$E = \mathop{\mathrm{End}}(\mathbb{Q}, \le)$ has automatic homeomorphicity,
meaning that any isomorphism~$\theta$ between~$E$ and a closed
submonoid~$E'\subseteq \mathop{\mathrm{Tr}}(\Omega)$ on a countable set\/~$\Omega$ is a homeomorphism.
\end{theorem}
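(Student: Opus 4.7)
The plan is to verify that $\theta$ is both continuous and open, deducing this from the structural information encoded in Lemmas~\ref{4.2} and~\ref{4.3}. The crucial preliminary step is to locate a distinguished rank-$1$ $G$-orbit in~$\Omega$ coming from the constants. By Lemma~\ref{3.2}, the set~$C$ of constant maps is first-order definable in~$E$ as the left zeros; hence each $\theta(c_q) \in E' \subseteq \mathop{\mathrm{Tr}}(\Omega)$ is a left zero of~$\mathop{\mathrm{Tr}}(\Omega)$, i.e.\ a constant transformation of~$\Omega$, say with value $z_q \in \Omega$. Injectivity of~$\theta$ forces $q \mapsto z_q$ to be injective. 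For any $g \in G$ and any $x \in \Omega$ we compute $\theta(g)(z_q) = \theta(g)\theta(c_q)(x) = \theta(gc_q)(x) = \theta(c_{g(q)})(x) = z_{g(q)}$, so $\{z_q : q \in \mathbb{Q}\}$ is a single $G$-orbit whose stabiliser at~$z_q$ equals the (point- and setwise) stabiliser of~$\{q\}$ in~$G$. Under the parameterisation of $G$-orbits fixed at the beginning of this section, this is a rank-$1$ orbit~$\Omega_{i_0}$ with $a^{i_0}_{\{q\}} = z_q$.

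For continuity of~$\theta$, let $U' = \{h \in E' : h(a^i_B) = a^j_C\}$ be a sub-basic open subset of~$E'$, with $a^i_B \in \Omega_i$. Lemma~\ref{4.3}\eqref{4.3.2} says that $\theta(f)(a^i_B)$ depends only on~$f\restriction_B$. Since~$B$ is finite, only countably many restrictions $g = f\restriction_B$ arise, and for each such~$g$ the condition $\theta(f)(a^i_B) = a^j_C$ is determined purely by~$g$. Hence $\theta^{-1}(U')$ is the (countable) union of those basic open neighbourhoods $\{f \in E : f\restriction_B = g\}$ whose associated~$g$ satisfies the required equality; this is open in~$E$.

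For openness it suffices to show that each sub-basic open set $\mathcal{B}_{qr} = \{f \in E : f(q) = r\}$ has an open image. I claim
\[
\theta(\mathcal{B}_{qr}) = \bigl\{ h \in E' : h(a^{i_0}_{\{q\}}) = a^{i_0}_{\{r\}} \bigr\},
\]
the latter being a sub-basic open subset of~$E'$. If $f(q) = r$ then $|f(\{q\})| = 1 = n_{i_0}$, so Lemma~\ref{4.2} yields $\theta(f)(a^{i_0}_{\{q\}}) = a^{i_0}_{f(\{q\})} = a^{i_0}_{\{r\}}$. Conversely, if $\theta(f)(a^{i_0}_{\{q\}}) = a^{i_0}_{\{r\}}$, then Lemma~\ref{4.3}\eqref{4.3.3} gives $\{r\} \subseteq f(\{q\})$, forcing $f(q) = r$. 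Thus~$\theta$ sends sub-basic opens of~$E$ onto sub-basic opens of~$E'$ and so is open; combined with continuity, this makes~$\theta$ a homeomorphism.

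The main obstacle is pinning down the rank-$1$ orbit~$\Omega_{i_0}$ arising from the constants and matching it to the indexing used earlier; once this identification is secured, Lemmas~\ref{4.2} and~\ref{4.3} provide a clean two-way translation between sub-basic opens of~$E$ and of~$E'$, and no further technicalities are required.
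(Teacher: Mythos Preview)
Your continuity argument is correct and is essentially the paper's own argument via Lemma~\ref{4.3}\eqref{4.3.2}.

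The openness argument, however, contains a genuine gap. From the fact that~$C$ is the set of left zeros of~$E$ you correctly deduce that each $\theta(c_q)$ is a left zero of~$E'$; but you then assert that $\theta(c_q)$ is a left zero of $\mathop{\mathrm{Tr}}(\Omega)$, i.e.\ a constant map on~$\Omega$. That inference is invalid: a left zero of a submonoid need not be a left zero of the ambient monoid. Concretely, let $\Omega = \mathbb{Q}\sqcup\{\ast\}$ and let~$E'$ be the copy of~$E$ acting naturally on~$\mathbb{Q}$ and fixing~$\ast$; this is a closed submonoid isomorphic to~$E$, yet $\theta(c_q)$ sends~$\ast$ to~$\ast$ and every rational to~$q$, so it is not constant on~$\Omega$. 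The paper flags exactly this pitfall in the introduction (``examples show that, in general, we cannot rely on this property''), and it is the reason Lemma~\ref{4.0} was introduced. Without $\theta(c_q)$ being constant, your value~$z_q$ is not well defined, and the identification of a rank-$1$ orbit collapses.

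What your argument really needs is merely the \emph{existence} of some rank-$1$ $G$-orbit~$\Omega_{i_0}$; once that is secured, your computation $\theta(\mathcal{B}_{qr}) = \{h\in E': h(a^{i_0}_{\{q\}}) = a^{i_0}_{\{r\}}\}$ via Lemmas~\ref{4.2} and~\ref{4.3}\eqref{4.3.3} is perfectly valid and in fact tidier than the paper's version, which for each~$f$ locates some~$i$ with $0<n_i\le|\mathop{\mathrm{im}}(f)|$. The paper obtains the required~$i$ (and, in particular, a rank-$1$ orbit when $f$ is constant) by the contradiction argument at the end of the proof of Theorem~\ref{4.4}: if every nontrivial orbit had rank exceeding $|\mathop{\mathrm{im}}(f)|$, then $\theta(f)$ would land entirely in rank-$0$ orbits and one would get $\theta(gf)=\theta(f)$ for a suitable $g\in G$ with $gf\neq f$, contradicting injectivity of~$\theta$. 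You could repair your proof by invoking that argument (applied to any constant~$f$) to produce~$\Omega_{i_0}$, rather than the incorrect left-zero step.
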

\begin{proof} The sub-basic open sets in~$E$ and~$E'$ are of the form
$\mathcal{B}_{qr} = \{f \in E\colon f(q) = r\}$ and
$\mathcal{C}_{ijBC} = \{f \in E'\colon f(a^i_B) = a^j_C\}$ for $B \in \Omega_i, C \in \Omega_j$, so to establish continuity we have to
show that each $\theta^{-1}(\mathcal{C}_{ijBC})$ is open in~$E$. Now~$B$ is a finite set, so we may let
$B = \{q_1, q_2, \ldots, q_m\}$, and, for an arbitrary
$f\in\theta^{-1}(\mathcal{C}_{ijBC})$, we let $r_k = f(q_k)$. Thus $f \in
\bigcap_{k=1}^m \mathcal{B}_{q_k r_k}$. We show that
$\bigcap_{k=1}^m \mathcal{B}_{q_k r_k} \subseteq \theta^{-1}(\mathcal{C}_{ijBC})$, and this is what is required, since it shows that
$\theta^{-1}(\mathcal{C}_{ijBC})$ is a union of open sets, hence open in~$E$. For let $f' \in \bigcap_{k=1}^m\mathcal{B}_{q_k r_k}$.
Then $f'(q_k) = r_k$ for each~$k$, which means that~$f$ and~$f'$ agree
on~$B$. By part~\eqref{4.3.2} of Lemma~\ref{4.3}, it follows that
$\theta(f')(a^i_B) = \theta(f)(a^i_B) = a^j_C$. Hence $f' \in
\theta^{-1}(\mathcal{C}_{ijBC})$.

To show that~$\theta$ is also open, it suffices to show that the image of any sub-basic open set is open. So consider
$\theta(\mathcal{B}_{qr})$ for any rationals~$q$ and~$r$. Look at any member of this set, which may be written as $\theta(f)$
where $f \in \mathcal{B}_{qr}$; we shall find $i, j\in I$ and
$B, C\subseteq\mathbb{Q}$ so that
$\theta(f) \in \mathcal{C}_{ijBC} \subseteq \theta(\mathcal{B}_{qr})$. Since $f \in \mathcal{B}_{qr}$, $f(q) = r$. We shall show that
there is some $i \in I$ such that $|\mathop{\mathrm{im}}(f)| \ge n_i > 0$. Then we can find~$B$ and~$C$ of size~$n_i$ such that $f(B) = C$
with $q \in B$. Now we take $j = i$, and observe using
Lemma~\ref{4.2} that, $\theta(f)(a^i_B) = a^i_{f(B)} =  a^i_C$, which tells us
that $\theta(f) \in \mathcal{C}_{iiBC}$. Furthermore, for any $g \in \mathcal{C}_{iiBC}$, since we are in~$E'$, $g = \theta(h)$ for
some $h \in E$, and $\theta(h)(a^i_B) = a^i_C$. Therefore,
Lemma~\ref{4.3}\eqref{4.3.3} yields $C \subseteq h(B)$, and as $|B| = |C|$,
finiteness of~$B$ implies $C = h(B)$. As~$f$ maps~$q$ to~$r$,
and so~$q$ and~$r$ are the corresponding entries of~$B$ and~$C$ when enumerated in increasing order, it follows that~$h$
also maps~$q$ to~$r$. Hence $h \in \mathcal{B}_{qr}$, which shows that $g = \theta(h) \in \theta(\mathcal{B}_{qr})$, as required.

To see that such $i\in I$ exists, suppose otherwise. This means that for every $i \in I$, if $n_i > 0$ then $|\mathop{\mathrm{im}}(f)| < n_i$.
Consider any $i\in I$ and $a^i_B \in \Omega_i$ and let $\theta(f)(a^i_B) =
a^j_C$. Then $C \subseteq f(B)$  by Lemma~\ref{4.3} and so $n_j \le |\mathop{\mathrm{im}}(f)|$. It
follows that $n_j = 0$, and $C = \emptyset$.
Choose $g\in G$ such that $g(f(x))\neq f(x)$ holds for some
$x\in\mathbb{Q}$, e.g.\ $g(y)= y+1$ for $y\in \mathbb{Q}$.
For every $i\in I$ and $B\in [\mathbb{Q}]^{n_i}$,
$\theta(gf)(a^i_B) = \theta(g)\theta(f)(a^i_B) =
\theta(g)(a^j_{\emptyset}) = a^j_{g(\emptyset)} = a^j_{\emptyset} =
\theta(f)(a^i_B)$, showing that $\theta(gf) = \theta(f)$. However,
$gf\neq f$ by the choice of~$g$, contrary to the injectivity of~$\theta$.
\end{proof}

We would like to have more precise information about the action of the image of~$\theta$ on~$\Omega$. We have partial
information about this from Lemmas~\ref{4.1}, \ref{4.2}, \ref{4.3} but this still seems to leave many options open. We now set about
describing the most general situation we are aware of under which there is such an action. The conjecture will then be that
this describes everything that actually can occur.

The most natural way for~$E$ to act is just directly on~$\mathbb Q$, and we can see that the action on any orbit of rank 1
must be like this, since the condition used in Lemma~\ref{4.2} (namely that $|f(B)| = |B|$) is immediately verified. The next most natural action
is on $\bigcup_{1 \le i \le n}[\mathbb{Q}]^i$ for some fixed $n \ge 1$. Here we just let $\theta(f)(B) = f(B)$, and note that
the $G$-orbits are the $[\mathbb{Q}]^i = \{B \subset \mathbb{Q}\colon |B| =
i\}$, and this `cascades' through the $G$-orbits
depending on the behaviour of the map~$f$. Generalizing this, let $n = n_k > n_{k-1} > \ldots > n_0 = 0$. This time we
let $X = \bigcup_{0 \le i \le k}[\mathbb{Q}]^{n_i}$, and define $\theta(f)(B)$ to be the first~$n_i$ elements of~$f(B)$ if~$i$
is greatest such that $n_i \le |f(B)|$, if $i > 0$, and 0 if $i = 0$. It is straightforward to verify that this is an action.

The general action that we have in mind is built up from ones of this kind using a `tree'. The tree in question will be a
countable partially ordered set $(T, \le)$ in which for each $t \in T$, $\{s \in T\colon s \le t\}$ is a finite linearly ordered
set, with a labelling $l\colon T \to \mathbb{N}$ such that $t_1 < t_2 \Rightarrow l(t_1) < l(t_2)$ (strictly speaking, this
is a `forest'). Given such~$T$, which has at least one point labelled by a non-zero number, (or else, infinitely many labelled
0), we can form $\Omega = \bigcup\{[\mathbb{Q}]^{l(t)} \times \{t\}\colon t \in T\}$, and the action is given as above `down each
branch'. That is, $\theta(f)(B_1, t_1) = (B_2, t_2)$ if~$B_2$ is the first $l(t_2)$ elements of $f(B_1)$ if~$t_2$ is the
greatest point below~$t_1$ in~$T$ such that $l(t_2) \le |f(B_1)|$. This is similarly easily verified to be an action. So the
main question remaining here is whether all such actions are of this form.

\section{Automatic homeomorphicity of $\mathop{\mathrm{Pol}}(\mathbb{Q}, \le)$}
\label{sect:5}
In this section we use ideas from earlier in the paper to prove automatic homeomorphicity for the polymorphism clone
$\mathop{\mathrm{Pol}}(\mathbb{Q}, \le)$. For definitions of the relevant notions here we refer the reader to~\cite{Bodirsky}, but mention a few
notations and ideas that are needed.
Denoting by $\mathcal{O}_{A}$ the collection of all finitary
operations $f\colon A^n\to A$ ($n\geq 0$) on a set~$A$, a subset
$C\subseteq \mathcal{O}_{A}$ is called a (`concrete') clone on~$A$
if it is closed under the operations of composition when defined
(that is, the `arities' are correct) and it contains all
`projections'. These are the maps
$\pi_i^{(n)}\colon A^n \to A$ given by $\pi_i^{(n)}(a_1, a_2, \ldots, a_n) = a_i$, where $1 \le i \le n$.
The collection of all polymorphisms of a relational
structure always forms a clone, and clones arising in this way are
precisely the ones that are topologically closed. Of central interest here is the clone
$\mathop{\mathrm{Pol}}(\mathbb{Q},\leq)$ of polymorphisms of $(\mathbb{Q}, \le)$,
which is the family of all $n$-ary
functions on~$\mathbb Q$ for $n \ge 0$ that preserve~$\le$, i.e.\ that
are monotone maps from $(\mathbb{Q},\leq)^n$ to $(\mathbb{Q},\leq)$. Spelling out precisely what this means, $f\colon\mathbb{Q}^n \to \mathbb{Q}$ lies in the clone provided
that if $(a_1, a_2, \ldots, a_n), (b_1, b_2, \ldots, b_n) \in \mathbb{Q}^n$ and $a_i \le b_i$ for all~$i$, then
$f(a_1, a_2, \ldots, a_n) \le f(b_1, b_2, \ldots, b_n)$. There is a corresponding notion of `abstract clone', which we do not
require here. Let us note also that the set~$\mathcal{O}_{A}$ of all
finitary operations on~$A$ forms a clone, even a polymorphism clone
(e.g., $\mathcal{O}_{A} = \mathop{\mathrm{Pol}}(A, A)$).
This is the analogue of $\mathop{\mathrm{Sym}}(A)$ for the automorphism group
and $\mathop{\mathrm{Tr}}(A)$ for the endomorphism monoid.

Relying on results of~\cite{Bodirsky}, when proving automatic
homeomorphicity of the clone $\mathop{\mathrm{Pol}}(\mathbb{Q},\leq)$, it
will suffice to verify that any clone isomorphism between
$\mathop{\mathrm{Pol}}(\mathbb{Q},\leq)$ and a closed clone on some countable set
is continuous. To exhibit the general method we are using here, we first
prove the following result, which is based on adapting the strategy used
to demonstrate the first part of Theorem~\ref{4.4}.

\begin{lemma}\label{5.1}
Let~$A$ and~$B$ be sets, $P$ and~$P'$ be clones on~$A$ and~$B$,
respectively, and $\theta\colon P\to P'$ be a clone homomorphism.
If for every $b\in B$ there is some unary function $h\in P^{(1)}$
with finite image such that $\theta(h)(b) = b$, then~$\theta$ is
continuous.
\end{lemma}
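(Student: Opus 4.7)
The plan is to show that $\theta$ is continuous by verifying that the preimage of every sub-basic open set in $P'$ is open in $P$. A typical such set, at arity $n\geq 0$, has the form $V = \{g\in P'^{(n)}\colon g(b_1,\ldots,b_n)=c\}$ for some $b_1,\ldots,b_n,c\in B$. Fixing any $f\in\theta^{-1}(V)$, the goal is to produce a basic open neighbourhood of~$f$ contained in $\theta^{-1}(V)$.

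The key idea is a composition trick that exploits the hypothesis. Using the assumption, I choose for each $i=1,\ldots,n$ a unary operation $h_i\in P^{(1)}$ with finite image $F_i=\mathop{\mathrm{im}}(h_i)$ such that $\theta(h_i)(b_i)=b_i$, and I define
\[ U := \{f''\in P^{(n)}\colon f''\!\restriction_{F_1\times\cdots\times F_n} = f\!\restriction_{F_1\times\cdots\times F_n}\}. \]
Since $F_1\times\cdots\times F_n$ is finite, $U$ is a basic open neighbourhood of~$f$ in~$P^{(n)}$. For any $f''\in U$, the clone-composed operations $f(h_1(x_1),\ldots,h_n(x_n))$ and $f''(h_1(x_1),\ldots,h_n(x_n))$ coincide as elements of $P^{(n)}$, because every input $h_i(x_i)$ lies in $F_i$ and $f$, $f''$ agree on $F_1\times\cdots\times F_n$.

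Now I apply~$\theta$ to this equality; since $\theta$ is a clone homomorphism it commutes with composition, and evaluating the resulting identity in $P'^{(n)}$ at the point $(b_1,\ldots,b_n)$, using $\theta(h_i)(b_i)=b_i$, yields
\begin{align*}
\theta(f'')(b_1,\ldots,b_n)
 &= \theta(f'')(\theta(h_1)(b_1),\ldots,\theta(h_n)(b_n)) \\
 &= \theta(f)(\theta(h_1)(b_1),\ldots,\theta(h_n)(b_n))
  = \theta(f)(b_1,\ldots,b_n) = c,
\end{align*}
so that $f''\in\theta^{-1}(V)$. Thus $U\subseteq\theta^{-1}(V)$ and $\theta^{-1}(V)$ is open, as required. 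The main obstacle is really only the conceptual step of finding the right neighbourhood: one has to notice that composing $f$ on the inside with the finite-image $h_i$'s simultaneously collapses the dependence of~$f$ to its values on the finite rectangle $F_1\times\cdots\times F_n$ (giving a basic open set) while preserving the value of $\theta(f)$ at $(b_1,\ldots,b_n)$ through the fixing condition $\theta(h_i)(b_i)=b_i$. Once this trick is in place, everything else is a purely formal consequence of arity-preservation and clone composition.
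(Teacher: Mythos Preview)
Your proof is correct and follows essentially the same approach as the paper's: both choose unary $h_i\in P^{(1)}$ with finite image fixing $b_i$ under $\theta$, form the basic open neighbourhood determined by agreement on the finite rectangle $\prod_i\mathop{\mathrm{im}}(h_i)$, and use the clone-composition identity $f\circ(h_1\circ\pi_1^{(n)},\ldots,h_n\circ\pi_n^{(n)})=f''\circ(h_1\circ\pi_1^{(n)},\ldots,h_n\circ\pi_n^{(n)})$ together with $\theta(h_i)(b_i)=b_i$ to conclude. The only differences are notational.
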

\begin{proof}
Under the given assumptions we have to verify that $\theta^{-1}(C)$ is
open in~$P$ for any sub-basic open set~$C$ of~$P'$. By definition of the
topology of~$P'$ there are $n\in\mathbb{N}$, $(b_1,\dotsc,b_n)=b\in B^n$ and $b'\in B$
such that $C = \{g\in P'^{(n)}\colon g(b)=b'\}$. We want to prove that
every $f\in \theta^{-1}(C)$ is surrounded by a whole open neighbourhood
inside $\theta^{-1}(C)= \{f\in P^{(n)}\colon \theta(f)(b)=b'\}$,
showing that~$f$ is an interior point of~$\theta^{-1}(C)$.

By the assumption of the lemma, we can find maps
$h_1,\dotsc,h_n\in P^{(1)}$
satisfying $\theta(h_i)(b_i) = b_i$ and having finite image
$\mathop{\mathrm{im}}(h_i)\subseteq A$ for every index $i\in\{1,\dotsc,n\}$.
Therefore, the Cartesian product
$A' = \prod_{i=1}^n \mathop{\mathrm{im}}(h_i)\subseteq A^n$ is
finite, too and thus the set
  $P_f = \bigcap_{a\in A'}\{f'\in P^{(n)}\colon f'(a) = f(a) \}$
is a basic open neighbourhood of~$f$ in the topology of~$P$.
Hence, the result is proved once we establish that
$P_f\subseteq \theta^{-1}(C)$.

For this let~$f'$ be any function
in~$P_{f}$, that is, we assume $f'(a) = f(a)$ for every $a\in A'$.
Thus the $n$-ary functions~$f$ and~$f'$ coincide on the finite set
$A'=\prod_{i=1}^{n} \mathop{\mathrm{im}}(h_i)$, which then implies the equation
$f \circ \left(h_1 \circ \pi_1^{(n)}, h_2 \circ \pi_2^{(n)}, \ldots, h_n \circ \pi_n^{(n)}\right) =
f' \circ \left(h_1 \circ \pi_1^{(n)}, h_2 \circ \pi_2^{(n)}, \ldots, h_n \circ \pi_n^{(n)}\right)$.
From here we can conclude that $\theta(f')(b) = \theta(f)(b) = b'$,
i.e., $f'\in \theta^{-1}(C)$, as follows:
\begin{align*}
\theta(f)(b) = \theta(f)(b_1,\dotsc,b_n)
&= \theta(f)(\theta(h_1)(b_1),\dotsc,\theta(h_n)(b_n))\\
&= \theta(f)\left(\theta(h_1)\bigl(\pi_1^{(n)}(b)\bigr),\dotsc,\theta(h_n)\bigl(\pi_n^{(n)}(b)\bigr)\right)\\
&= \theta(f)\left(\theta(h_1)\bigl(\theta\bigl(\pi_1^{(n)}\bigr)(b)\bigr),\dotsc,\theta(h_n)\bigl(\theta\bigl(\pi_n^{(n)}\bigr)(b)\bigr)\right)\\
&= \theta(f)\circ\left(\theta(h_1)\circ\theta\bigl(\pi_1^{(n)}\bigr),\dotsc,\theta(h_n)\circ\theta\bigl(\pi_n^{(n)}\bigr)\right)(b)\\
&= \theta\left(f\circ\left(h_1\circ\pi_1^{(n)},\dotsc,h_n\circ\pi_n^{(n)}\right)\right)(b).
\end{align*}
Similarly, $\theta(f')(b) = \theta\left(f'\circ\left(h_1\circ\pi_1^{(n)},\dotsc,h_n\circ\pi_n^{(n)}\right)\right)(b)$. From
the above equation it follows that $\theta(f')(b) = \theta(f)(b) = b'$, as required.
\end{proof}

Proving automatic homeomorphicity of
$P =\mathop{\mathrm{Pol}}(\mathbb{Q}, \leq )$ now basically boils down
to verifying the assumptions of the preceding result.

\begin{theorem}\label{5.2}
$\mathop{\mathrm{Pol}}(\mathbb{Q}, \le)$ has automatic homeomorphicity, meaning
that any isomorphism~$\theta$ from $P = \mathop{\mathrm{Pol}}(\mathbb{Q}, \le)$
to a closed subclone~$P'$ of\/~$\mathcal{O}_\Omega$, for a countable
set\/~$\Omega$, is a homeomorphism.
\end{theorem}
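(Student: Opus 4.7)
My plan is to reduce the statement to Lemma~\ref{5.1} and to verify its hypothesis via the structural analysis of Section~\ref{sect:4} applied to the unary level of the clone. By the remarks preceding Lemma~\ref{5.1}, it suffices to prove that any clone isomorphism $\theta\colon P\to P'$ onto a closed subclone $P'\subseteq\mathcal{O}_\Omega$ is continuous; and by Lemma~\ref{5.1} this reduces in turn to exhibiting, for every $b\in\Omega$, some $h\in P^{(1)}=E$ with finite image satisfying $\theta(h)(b)=b$.

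First I would observe that $\theta$ restricts to a monoid isomorphism $\theta_1\colon E\to E'$ where $E'=P'^{(1)}$. Since $P'$ is closed in $\mathcal{O}_\Omega$ (endowed with the disjoint-arity product topology), the unary stratum $E'=P'\cap\mathop{\mathrm{Tr}}(\Omega)$ is closed in $\mathop{\mathrm{Tr}}(\Omega)$. Theorem~\ref{4.4} therefore applies to $\theta_1$, so the whole $G$-orbit analysis developed in Section~\ref{sect:4} is at our disposal: we may write $\Omega=\bigcup_{i\in I}\Omega_i$ with each $G$-orbit parametrised as $\Omega_i=\{a^i_B\colon B\in[\mathbb{Q}]^{n_i}\}$ for some rank $n_i\ge 0$, and Lemmas~\ref{4.1}--\ref{4.3} are applicable (with $\theta_1$ in place of the monoid isomorphism there).

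Given $b\in\Omega$, write $b=a^i_B$ for the unique $i$ and $B$. If $n_i\ge 1$, then Lemma~\ref{4.3}\eqref{4.3.1} directly furnishes an idempotent $h\in E$ with $\mathop{\mathrm{im}}(h)=B$ satisfying $\theta(h)(a^i_B)=a^i_B$; since $B$ is finite, this $h$ has the required finite image. If $n_i=0$, then $B=\emptyset$, and I take $h$ to be any constant endomorphism, say $h=c_0\in E$, whose image is the singleton $\{0\}$; Lemma~\ref{4.2}, applied with $f=c_0$ and $B=\emptyset$ (whence the condition $\lvert c_0(\emptyset)\rvert=0=\lvert\emptyset\rvert$ is trivially met), yields $\theta(c_0)(a^i_\emptyset)=a^i_{c_0(\emptyset)}=a^i_\emptyset=b$. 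In either case Lemma~\ref{5.1} applies and $\theta$ is continuous, hence a homeomorphism by the reduction cited at the outset.

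The principal hurdle is the very first step, namely verifying that $E'=P'^{(1)}$ is a closed submonoid of $\mathop{\mathrm{Tr}}(\Omega)$ so that Theorem~\ref{4.4} can be invoked together with the structural description of $\Omega$. Once that is in hand, the hypothesis of Lemma~\ref{5.1} splits cleanly into the case of positive rank, handled by the finite-image idempotents supplied by Lemma~\ref{4.3}\eqref{4.3.1}, and the rank-zero case, handled by constants; the availability of the latter in $E$ (but not in $M$) is exactly the ingredient flagged in the introduction as the reason why this argument works for the reflexive order $(\mathbb{Q},\le)$ but does not transfer to $(\mathbb{Q},<)$.
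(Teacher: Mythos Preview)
Your proof is correct and takes essentially the same approach as the paper: reduce openness to Proposition~27 of~\cite{Bodirsky}, then verify the hypothesis of Lemma~\ref{5.1} by applying the $G$-orbit machinery of Section~\ref{sect:4} to the unary restriction $\theta\restriction_E\colon E\to E'$, which is an isomorphism onto a closed submonoid. Your explicit treatment of rank-$0$ orbits via a constant map is a slight refinement over the paper, which simply cites Lemma~\ref{4.3}\eqref{4.3.1}; note also that invoking Theorem~\ref{4.4} itself is unnecessary---only the closedness of $E'$ is needed for the orbit decomposition and Lemmas~\ref{4.1}--\ref{4.3} to apply.
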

\begin{proof} Note that, unlike in the case of the \emph{monoid}~$E$, where we would have had to prove both continuity and
openness of the given isomorphism~$\theta$, here we only need to check continuity, since openness follows from Proposition~27
of~\cite{Bodirsky}, and this avoids the need for proving the analogue of Lemma~\ref{4.3} (though this analogue still holds).

To demonstrate that~$\theta$ is continuous, we use the machinery from
section~\ref{sect:4} to provide the assumptions of
Lemma~\ref{5.1}. Note that these properties are determined entirely by the
restriction $\theta\restriction_E\colon P^{(1)}\to P'^{(1)}$, which is a
monoid isomorphism between the unary parts $P^{(1)} = E$ and
$E':= P'^{(1)}$ (these are closed monoids because~$P$ and
$\mathop{\mathrm{Tr}}(\mathbb{Q})$, and~$P'$ and $\mathop{\mathrm{Tr}}(\Omega)$ are closed
sets). Namely, we have to verify that for every $b\in\Omega$ we
can  find an endomorphism $h\in E$ with finite image such that
$\theta(h)(b) = \theta\restriction_E(h)(b)= b$. However, this is
precisely the content of part~\eqref{4.3.1} of Lemma~\ref{4.3} applied
to~$\theta\restriction_E$.
 \end{proof}

\section{Automatic homeomorphicity of clones generated by monoids}
\label{sect:6}

In this final section we show that automatic homeomorphicity results can be lifted from monoids to the polymorphism clones
they generate, under appropriate conditions. Given a submonoid~$E$ of the full transformation monoid $\mathop{\mathrm{Tr}}(\Omega)$ on
a set~$\Omega$, there is a least clone $\langle E \rangle$ on~$\Omega$ containing~$E$; it may be formed by including all
projections, and then closing up under compositions of functions where these are defined; it may be explicitly written as
$\bigcup_{k\in \mathbb{N} \setminus \{0\}}\{f\circ \pi_j^{(k)} \colon j\in \{1,\ldots,k\} \land f \in E\}$. This is of course a
rather small subclone of $\mathop{\mathrm{Pol}}(\Omega)$, so any results obtained about it do not really give us information about the
general situation. Our main result here is that if $E \subseteq \mathop{\mathrm{Tr}}(\Omega)$ is a closed transformation monoid which
has automatic homeomorphicity and its group of invertible members acts transitively on~$\Omega$, then $\langle E \rangle$
also has automatic homeomorphicity.

Because the definition of automatic homeomorphicity, as described in
section~\ref{sect:5}, is given for closed clones we start by
recalling that $\langle E \rangle$ is closed. This result belongs to the folklore of clone theory.

\begin{lemma}\label{6.1} If~$A$ is a set, and $E \subseteq \mathop{\mathrm{Tr}}(A)$ is a closed transformation monoid, then the clone
$\langle E \rangle$ is also closed.
\end{lemma}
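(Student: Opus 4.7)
My plan is to reduce closedness of $\langle E \rangle$ in $\mathcal{O}_A$ to closedness of each of its arity components. The pointwise-convergence topology on $\mathcal{O}_A$ is the coproduct of the corresponding topologies on the $\mathcal{O}_A^{(n)}$, so a subset $C \subseteq \mathcal{O}_A$ is closed if and only if each $C^{(n)} = C \cap \mathcal{O}_A^{(n)}$ is closed in $\mathcal{O}_A^{(n)}$. Hence I would fix $n \geq 1$ and show that $\langle E \rangle^{(n)}$ is closed.

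From the explicit description of $\langle E \rangle$ recalled just before the lemma, one has
\[
\langle E \rangle^{(n)} \;=\; \bigcup_{j=1}^{n} E_j, \quad\text{where}\quad E_j := \left\{f \circ \pi_j^{(n)} \colon f \in E\right\}.
\]
Since a finite union of closed sets is closed, it suffices to check that each $E_j$ ($1 \leq j \leq n$) is closed in $\mathcal{O}_A^{(n)}$.

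For this last step I would give a direct pointwise-convergence argument. Take a net $(g_\lambda)_{\lambda \in \Lambda}$ in $E_j$ converging to some $g \in \mathcal{O}_A^{(n)}$, and write $g_\lambda = f_\lambda \circ \pi_j^{(n)}$ with $f_\lambda \in E$. For every tuple $(a_1,\ldots,a_n) \in A^n$, convergence at this tuple gives $f_\lambda(a_j) = g_\lambda(a_1,\ldots,a_n) \to g(a_1,\ldots,a_n)$, so the limit value depends only on $a_j$; setting $f(a) := g(a,\ldots,a)$ therefore yields $g = f \circ \pi_j^{(n)}$. Evaluating the same convergence on diagonal tuples shows that $f_\lambda \to f$ in $\mathcal{O}_A^{(1)}$, and since $E$ is closed in $\mathop{\mathrm{Tr}}(A)$ this forces $f \in E$, whence $g \in E_j$ as required.

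The only mildly delicate point will be that $\langle E \rangle^{(n)}$ is presented as a union over the projection index $j$ rather than as a single continuous image of $E$; but finiteness of $\{1,\ldots,n\}$ resolves this cleanly, and no real obstacle seems to arise. The argument will rely only on the closedness of $E$ in $\mathop{\mathrm{Tr}}(A)$ and the elementary fact that pointwise convergence of $n$-ary functions can be verified tuple by tuple.
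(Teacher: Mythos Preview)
Your argument is correct, but it takes a different route from the paper. The paper proves the lemma relationally: it exhibits a quaternary relation $\rho = \{(x,y,z,u)\in A^4 : x=y \lor z=u\}$ such that $\mathop{\mathrm{Pol}}(A,\{\rho,\emptyset\}) = \langle \mathop{\mathrm{Tr}}(A)\rangle$, then writes $E = \mathop{\mathrm{End}}(A,Q)$ for some relation set~$Q$ (using closedness of~$E$), and concludes $\langle E\rangle = \mathop{\mathrm{Pol}}(A,Q\cup\{\rho,\emptyset\})$, which is closed because polymorphism clones always are. Your proof instead works directly with the topology: you slice by arity, decompose $\langle E\rangle^{(n)}$ as the finite union $\bigcup_{j=1}^n E_j$, and show each $E_j$ is closed by a net-convergence argument exploiting that $g_\lambda = f_\lambda\circ\pi_j^{(n)}$ forces the limit to depend only on the $j$th coordinate and that $f_\lambda\to f$ along the diagonal. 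The paper's approach yields extra structural information (an explicit relational axiomatization of $\langle E\rangle$) and ties into the $\mathrm{Pol}$--$\mathrm{Inv}$ Galois connection; yours is more elementary and self-contained, needing neither the clever relation~$\rho$ nor the fact that closed monoids are endomorphism monoids.
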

\begin{proof} Consider the quaternary relation $\rho = \{(x,y,z,u)\in A^4 \colon x = y \lor z = u\}$. We see that
$\mathop{\mathrm{Pol}}(A,\{\rho,\emptyset\}) = \langle \mathop{\mathrm{Tr}}(A) \rangle$. Since every function in
$\mathop{\mathrm{Pol}}\left(A,\{\rho,\emptyset\} \right)$ must preserve the empty
relation, its arity must be larger than zero.
If an $n$-ary (for $n > 0$) function $f \in\mathop{\mathrm{Pol}} \left(A,\{\rho,\emptyset\}\right)$ has at least two essential
arguments, for indices~$i$ and~$j$ with $1 \leq i < j \leq n$ say, then there are $(a_1,\dotsc,a_n), (b_1,\dotsc,b_n) \in A^n$
and $a',b' \in A$ such that the values $a,b,c,d\in A$ given by
\begin{align*}
  f(a_1,\dotsc,a_{i-1},a_i,a_{i+1},\dotsc,a_{j-1},a_j,a_{j+1},\dotsc,a_n) &= a\\
  f(a_1,\dotsc,a_{i-1},a',a_{i+1},\dotsc,a_{j-1},a_j,a_{j+1},\dotsc,a_n) &= b\\
  f(b_1,\dotsc,b_{i-1},b_i,b_{i+1},\dotsc,b_{j-1},b_j,b_{j+1},\dotsc,b_n) &= c\\
  f(b_1,\dotsc,b_{i-1},b_i,b_{i+1},\dotsc,b_{j-1},b',b_{j+1},\dotsc,b_n) &= d
\end{align*}
satisfy $a\neq b$ and $c \neq d$. This means that $(a,b,c,d) \notin \rho$, which violates the condition that~$f$ preserves
$\rho$, since $(a_i,a',b_i,b_i), (a_j,a_j,b_j,b')\in \rho$. Hence~$f$ has at most one essential position, the $i$th say
($1 \leq i \leq n$). As~$f$ depends at most on its $i$th position, for every $(x_1, \dotsc, x_n) \in A^n$ we have
$f(x_1, \dotsc, x_n) = f(x_i, x_2, \dotsc, x_n) = f(x_i,x_i, \dotsc, x_n) = \dotsm = f(x_i,\dotsc,x_i)$, which means that
$f = g \circ \pi_i^{(n)}$ for the unary function $g = f\circ(\mathrm{id}_A, \dotsc, \mathrm{id}_A)$, and so
$f \in \langle \mathop{\mathrm{Tr}}(A)\rangle$. The reverse inclusion is trivial.

Now as~$E$ is a closed transformation monoid there is a set of finitary relations~$Q$ on~$A$ (e.g.\ all invariant relations
of~$E$) such that $E = \mathop{\mathrm{End}}(A,Q) = \mathop{\mathrm{Tr}}(A)\cap \mathop{\mathrm{Pol}}(A,Q)$. This implies that
$\mathop{\mathrm{Pol}}(A,Q\cup \{\rho,\emptyset\}) = \mathop{\mathrm{Pol}}(A,Q)\cap
\mathop{\mathrm{Pol}}(A,\{\rho,\emptyset\}) = \mathop{\mathrm{Pol}}(A,Q) \cap \langle \mathop{\mathrm{Tr}}(A)\rangle = \langle E\rangle$, from which it
follows that $\langle E\rangle$ is a closed clone. Indeed, the inclusion
$\langle E \rangle \subseteq \mathop{\mathrm{Pol}}(A,Q)\cap\langle \mathop{\mathrm{Tr}}(A)\rangle$ is immediate. Conversely,
if $f \in \mathop{\mathrm{Pol}}(A,Q) \cap \langle \mathop{\mathrm{Tr}}(A)\rangle$, then there is an arity $n>0$,
an index~$i$ such that $1 \leq i \leq n$, and a unary operation $g \in
\mathop{\mathrm{Tr}}(A)$ such that $f = g\circ \pi_i^{(n)}$. It
follows that $f \circ (\mathrm{id}_A, \dotsc, \mathrm{id}_A) = g$, and so $g \in \mathop{\mathrm{Pol}}(A,Q)$ since
$f \in \mathop{\mathrm{Pol}}(A,Q)$. As~$g$ is unary, $g \in \mathop{\mathrm{End}}(A,Q) = E$. It follows that
$f = g \circ \pi_i^{(n)} \in \langle E \rangle$.
\end{proof}

\begin{lemma}\label{6.2} Let $E\subseteq \mathop{\mathrm{Tr}}(\Omega)$ be a transformation monoid on a countable set~$A$ and
$\theta\colon \langle E\rangle \to \mathcal{O}_\Omega$
be a clone homomorphism from
$\langle E\rangle$ into the clone of all
operations on a countable set~$\Omega$. If the restriction of~$\theta$ to
its unary part
$\theta\restriction_{E}\colon E \to \mathop{\mathrm{Tr}}(\Omega)$ is continuous,
then~$\theta$ is continuous.
\end{lemma}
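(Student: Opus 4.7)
The plan is to show that for every sub-basic open set $C=\{h\in\mathcal{O}_\Omega^{(n)}\colon h(b_1,\dotsc,b_n)=b'\}$ of $\mathcal{O}_\Omega$, its preimage $\theta^{-1}(C)$ is open in $\langle E\rangle$. The case $n=1$ is immediate from the hypothesis, so I assume $n\geq 2$. Fix $f\in\theta^{-1}(C)$. By the explicit description of $\langle E\rangle$ recalled at the start of this section, $f=g\circ\pi_i^{(n)}$ for some $g\in E$ and $i\in\{1,\dotsc,n\}$, and because $\theta$ preserves composition and projections, $\theta(f)=\theta(g)\circ\pi_i^{(n)}$, forcing $\theta(g)(b_i)=b'$. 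Continuity of $\theta\restriction_E$ yields a finite $F\subseteq A$ such that any $h\in E$ with $h\restriction_F = g\restriction_F$ satisfies $\theta(h)(b_i)=b'$. It remains to exhibit a basic open neighbourhood $V$ of $f$ in $\langle E\rangle^{(n)}$ contained in $\theta^{-1}(C)$. The delicate point is that a typical $f'\in\langle E\rangle^{(n)}$ decomposes as $g'\circ\pi_{i'}^{(n)}$ with an unknown index $i'$, and $f$ itself has a unique such representation only when $g$ is non-constant; this necessitates a case split.

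If $g$ is non-constant, pick $a_0,a_1\in A$ with $g(a_0)\neq g(a_1)$ and adjoin them to $F$. Let $V$ be the set of $f'\in\langle E\rangle^{(n)}$ agreeing with $f$ at every diagonal tuple $(a,\dotsc,a)$ with $a\in F$ and, for each $k\in\{1,\dotsc,n\}\setminus\{i\}$, at the tuple $\mathbf{t}_k$ having $a_1$ in position $k$ and $a_0$ elsewhere. For $f'=g'\circ\pi_{i'}^{(n)}\in V$, the diagonal conditions give $g'\restriction_F = g\restriction_F$; if some $i'=k\neq i$ held, the tuple $\mathbf{t}_k$ would force $g'(a_1)=f'(\mathbf{t}_k)=f(\mathbf{t}_k)=g(a_0)$, contradicting $g'(a_1)=g(a_1)\neq g(a_0)$. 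Therefore $i'=i$, and $\theta(f')(b_1,\dotsc,b_n)=\theta(g')(b_i)=b'$.

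If $g$ is the constant map with value $v$, then the identity $c_v\circ\pi_j^{(n)}=c_v\circ\pi_k^{(n)}$ combined with the clone homomorphism property implies $\theta(c_v)(x_j)=\theta(c_v)(x_k)$ for arbitrary $x_j,x_k\in\Omega$, so $\theta(c_v)$ itself is constant, necessarily taking the value $b'$ everywhere. Applying continuity of $\theta\restriction_E$ at $c_v$ once for each condition $\theta(h)(b_k)=b'$ and taking the union of the resulting finite sets, we obtain $F\subseteq A$ such that $h\restriction_F = c_v\restriction_F$ forces $\theta(h)(b_k)=b'$ for every $k\in\{1,\dotsc,n\}$. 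Taking $V$ to be the set of $f'\in\langle E\rangle^{(n)}$ with $f'(a,\dotsc,a)=v$ for all $a\in F$, any $f'=g'\circ\pi_{i'}^{(n)}\in V$ satisfies $g'\restriction_F = c_v\restriction_F$, hence $\theta(g')(b_{i'})=b'$ and $\theta(f')(b_1,\dotsc,b_n)=b'$.

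The main obstacle is the non-uniqueness of the decomposition $f=g\circ\pi_i^{(n)}$ in the constant case: nearby functions may swap essential indices, so we cannot hope to pin down a single $i'$; instead we exploit the fact that $\theta(c_v)$ itself collapses to a constant, so the target value $b'$ is attained at every potential $b_{i'}$ simultaneously. In the non-constant case the obstacle is resolved by the distinguishing tuples $\mathbf{t}_k$, which use the existence of $a_0,a_1$ with $g(a_0)\neq g(a_1)$ to force $i'=i$.
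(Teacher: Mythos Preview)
Your proof is correct, but it follows a different route from the paper's. The paper argues via sequential continuity (using the metrizability of the topology mentioned in the introduction): given a convergent sequence $g_n\to g$ in $\langle E\rangle^{(k)}$, it writes $g_n=f_n\circ\pi_{j_n}^{(k)}$ and $g=f\circ\pi_j^{(k)}$, observes that $f_n\to f$ via the diagonal, and then partitions the sequence according to the index $j_n$. If $f$ is non-constant, a metric estimate forces almost all $j_n$ to equal $j$, and the result follows from continuity of composition together with the assumed continuity of $\theta\restriction_E$; if $f$ is constant, each of the finitely many subsequences with fixed index is shown separately to converge to $\theta(g)$, and these are reassembled.

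Your argument instead works directly with sub-basic open sets, in the style of the proof of Lemma~5.1, and exhibits an explicit basic neighbourhood of $f$ inside $\theta^{-1}(C)$. The constant/non-constant split reappears, but you handle it purely combinatorially: in the non-constant case the distinguishing tuples $\mathbf{t}_k$ force the essential index to be $i$; in the constant case you first deduce that $\theta(c_v)$ is itself constant (a nice observation that is not used in the paper's proof), so that the value $b'$ is achieved at every $b_k$ simultaneously, making the unknown index $i'$ harmless. Your approach avoids all metric and subsequence bookkeeping and would go through verbatim without assuming metrizability; the paper's approach, by contrast, makes essential use of sequences and the ultrametric.
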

\begin{proof} In the proof we distinguish two cases. Let $(g_n)_{n\in \mathbb{N}}$ be a sequence of $k$-ary operations of
$\langle E \rangle$ that converges to $g \in
\langle E\rangle^{(k)}$ say. We want to prove
that $\lim_{n\to\infty} \theta(g_n) = \theta(g)$.
Since
\[\langle E \rangle = \bigcup_{k\in \mathbb{N}\setminus \{0\}}\{f\circ \pi_j^{(k)}\colon j\in \{1,\ldots,k\}
\land f\in E\},\]
we have $g_n=f_n \circ \pi_{j_n}^{(k)}$ for all $n \in \mathbb{N}$ with $f_n \in E$ and $1\leq j_n \leq k$ (the index~$j_n$ may
not be uniquely determined in the case that~$f_n$ is constant, but then we make an arbitrary choice, for instance $j_n = 1$),
and $g = f \circ \pi_j^{(k)}$ for some $f \in E$ and~$j$ such that $1 \leq j \leq k$. Let us first note that
\begin{align*}
\lim_{n\to\infty} f_n
&= \lim_{n\to\infty}(g_n\circ(\mathrm{id}_A,\dotsc,\mathrm{id}_A))
= \left(\lim_{n\to\infty} g_n\right) \circ (\mathrm{id}_A,\dotsc,\mathrm{id}_A)
= g\circ (\mathrm{id}_A,\dotsc,\mathrm{id}_A)\\
&= \left(f\circ \pi_j^{(k)}\right)\circ (\mathrm{id}_A,\dotsc,\mathrm{id}_A)
= f\circ \mathrm{id}_A
= f
\end{align*}
since composition of functions is continuous with regard to the product topology.

The collection $\{\{n\in \mathbb{N}\colon j_n = t\}\colon 1\leq t \leq k\}$ consists of disjoint subsets of $\mathbb N$ whose union
covers $\mathbb N$. Since this collection is finite, the set $\{n \in \mathbb{N}\colon j_n = t\}$ must be infinite for at least one
$t\in\{1,\dotsc,k\}$. By shifting the index of the sequence $(g_n)_{n \in \mathbb{N}}$ past the largest element of
the finite members of this collection, we may assume that each of these sets is either infinite or empty. Let
$t_1, \dots, t_\ell$ be the distinct indices in $\{1,\dotsc,k\}$ for which
$I_\nu = \{n\in\mathbb{N}\colon j_n = t_\nu\}$ $(1\leq \nu\leq \ell)$ is infinite. By enumerating $I_\nu$ in strictly increasing
order we get subsequences $(n_{\nu,i})_{i\in\mathbb{N}}\in \mathbb{N}^\mathbb{N}$ such that
$I_\nu = \{n_{\nu,i}\colon i \in \mathbb{N}\}$ and thus $j_{n_{\nu,i}} = t_\nu$ is constant for all $i\in\mathbb{N}$.
The first case we consider is that~$g$, equivalently~$f$, is a constant map. Then $f\circ \pi^{(k)}_j = f\circ \pi^{(k)}_{t_{\nu}}$
holds for every $\nu\in\{1,\dotsc,\ell\}$, and so we can infer that
\begin{align*}
\lim_{i\to\infty} \theta(g_{n_{\nu,i}})
&= \lim_{i\to\infty}
   \theta\left(f_{n_{\nu,i}} \circ \pi^{(k)}_{j_{n_{\nu,i}}}\right)
 = \lim_{i\to\infty}
   \theta\left(f_{n_{\nu,i}} \circ \pi^{(k)}_{t_{\nu}}\right)
 = \lim_{i\to\infty}
   \left(\theta(f_{n_{\nu,i}}) \circ
          \theta\left(\pi^{(k)}_{t_{\nu}}\right)\right)\\
&= \left(\lim_{i\to\infty} \theta(f_{n_{\nu,i}})\right)\circ
   \left(\lim_{i\to\infty} \theta\left(\pi^{(k)}_{t_{\nu}}\right)\right)
 \stackrel{\dagger}{=}
   \theta\left(\lim_{i\to\infty} f_{n_{\nu,i}}\right)\circ
   \theta\left(\pi^{(k)}_{t_{\nu}}\right)\\
&= \theta\left(\lim_{n\to\infty} f_{n}\right)\circ
   \theta\left(\pi^{(k)}_{t_{\nu}}\right)
 = \theta(f)\circ \theta\left(\pi^{(k)}_{t_{\nu}}\right)
 = \theta\left(f\circ \pi^{(k)}_{t_{\nu}}\right)
 = \theta\left(f\circ \pi^{(k)}_{j}\right)
 = \theta(g)
\end{align*}
for each~$\nu$ such that $1\leq \nu\leq\ell$ (the equation marked by~$\dagger$ follows from the continuity of~$\theta$ for unary
operations). Now we have a partition of a sequence into a finite number of subsequences each of which converges to the same
limit~$\theta(g)$. It follows that $\lim_{n\to\infty}\theta(g_n) = \theta(g)$: for every $\varepsilon > 0$ (and~$\nu$ such
that $1 \leq \nu \leq \ell$) we can find  an index $N_\nu\in\mathbb{N}$ such that $\theta\left(g_{n_{\nu,i}}\right)$ has
distance less than $\varepsilon$ from $\theta(g)$ for all $i> N_{\nu}$. Let $N = \max\{n_{\nu,N_{\nu}}\colon 1\leq \nu\leq \ell\}$
and consider $n>N$. Using the partition, we find some
$\nu\in\{1,\dotsc,\ell\}$ and some $i\in\mathbb{N}$ such that
$n = n_{\nu,i}$. If $i \leq N_{\nu}$, then $n = n_{\nu,i}\leq n_{\nu,N_{\nu}} \leq N$ contradicts $n>N$, so $i>N_{\nu}$. This
means that the distance from  $\theta(g_{n}) = \theta(g_{n_{\nu,i}})$ to $\theta(g)$ is less than $\varepsilon$.

The second case of the proof is when~$f$ is not constant. We show that $\ell = 1$ and $t_1 = j$. In order to obtain a
contradiction, let us assume that there is $\nu\in\{1,\dotsc,\ell\}$
where $t = t_\nu \neq j$. No generality is
lost in assuming that $t < j$. Since~$f$ is not constant, there are arguments $x, y$ such that
$f \circ \pi^{(k)}_t (x,\dotsc,x,y,\dotsc,y) = f(x) \neq f(y) = f\circ \pi^{(k)}_{j}(x,\dotsc,x,y,\dotsc,y)$, and the last~$x$ occurs
in the $t$th position. Thus $f \circ \pi^{(k)}_t\neq f\circ \pi^{(k)}_j$, and so
$\varepsilon = d\left(f \circ \pi^{(k)}_t,f\circ \pi^{(k)}_j\right) > 0$. The subsequence $\left(f_{n_{\nu,i}}\right)_{i\in\mathbb{N}}$ converges
to~$f$, and as composition of functions is continuous, the same holds for the sequence
$\left(f_{n_{\nu,i}}\circ \pi^{(k)}_{j_{n_{\nu,i}}}\right)_{i\in\mathbb{N}}
  =\left(f_{n_{\nu,i}}\circ \pi^{(k)}_{t_\nu}\right)_{i\in\mathbb{N}}
  =\left(f_{n_{\nu,i}}\circ \pi^{(k)}_{t}\right)_{i\in\mathbb{N}}$ and
  $f\circ \pi^{(k)}_t$. Let us choose $i\in\mathbb{N}$ large enough that
  $d\left(f_{n_{\nu,i}}\circ \pi^{(k)}_t,f\circ\pi^{(k)}_t \right)
     <\frac{\varepsilon}{2}$. By the triangle inequality,
  \begin{align*}
  \varepsilon =
    d\left(f\circ \pi^{(k)}_j,f\circ \pi^{(k)}_t\right)&\leq
    d\left(f\circ \pi^{(k)}_j,f_{n_{\nu,i}}\circ \pi^{(k)}_t\right)+
    d\left(f_{n_{\nu,i}}\circ \pi^{(k)}_t,f\circ \pi^{(k)}_t\right)\\
    &<d\left(f\circ \pi^{(k)}_j,f_{n_{\nu,i}}\circ \pi^{(k)}_t\right)+
    \frac{\varepsilon}{2},
 \end{align*}
  i.e.\
$  d\left(g,g_{n_{\nu,i}}\right)=
  d\left(f\circ \pi^{(k)}_j, f_{n_{\nu,i}}\circ \pi^{(k)}_{j_{n_{\nu,i}}}\right)=
  d\left(f\circ \pi^{(k)}_j, f_{n_{\nu,i}}\circ \pi^{(k)}_{t_{\nu}}\right)=
  d\left(f\circ \pi^{(k)}_j, f_{n_{\nu,i}}\circ \pi^{(k)}_t\right)
           >\frac{\varepsilon}{2}$
for all sufficiently large $i \in \mathbb{N}$. This means that $(g_n)_{n\in\mathbb{N}}$ cannot converge to~$g$, contrary to our
  overall assumption. Therefore, all (distinct) $t_\nu$ have to be
  equal to~$j$, and thus there can only be one such $t_\nu = t_1 = j$.
  This means that for all but finitely many $n\in\mathbb{N}$ (which we safely
  ignored above) we have $j_n = j$ and thus
  $g_n = f_n \circ  \pi^{(k)}_{j_n} = f_n\circ \pi^{(k)}_j$ for almost all
  $n\in\mathbb{N}$.
  This enables us to conclude that
  \begin{align*}
  \lim_{n\to\infty} \theta(g_n) &=
  \lim_{n\to\infty} \theta\left(f_n\circ\pi^{(k)}_j\right)=
  \lim_{n\to\infty} \left(\theta (f_n)\circ\theta\left(\pi^{(k)}_j\right)\right)
  \stackrel{\ddagger}{=}
   \theta\left(\lim_{n\to\infty} f_n\right) \circ\theta\left(\pi^{(k)}_j\right)\\
  &=\theta(f)\circ\theta\left(\pi^{(k)}_j\right)
  =\theta\left(f\circ \pi^{(k)}_j\right)
  =\theta(g),
  \end{align*}
  where the equation marked by~$\ddagger$ holds because of the assumed continuity of~$\theta$ restricted to unary operations and the continuity of the composition of operations.
\end{proof}

\begin{corollary}\label{6.3} Let $\mathbb{A}$ be a countable relational structure and
$\theta\colon \langle \mathop{\mathrm{End}}(\mathbb{A})\rangle \to C$ be a clone isomorphism between
$\langle \mathop{\mathrm{End}}(\mathbb{A})\rangle$ and a closed clone~$C$ over a countable set\/~$\Omega$. If the restriction of
$\theta$ to its unary part $\theta\restriction_{\mathop{\mathrm{End}}(\mathbb{A})}\colon\mathop{\mathrm{End}}(\mathbb{A}) \to C^{(1)}$ is continuous,
then~$\theta$ is continuous.
\end{corollary}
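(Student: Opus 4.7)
My plan is to deduce Corollary~\ref{6.3} immediately from Lemma~\ref{6.2}; the statement is essentially just a repackaging of that lemma for the setting where the target clone is required to sit inside a closed subclone~$C$ of $\mathcal{O}_\Omega$, and where the source clone is generated by the endomorphism monoid of a countable relational structure.

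First I would identify the ingredients. Write~$A$ for the (countable) carrier of~$\mathbb{A}$, so that $E:=\mathop{\mathrm{End}}(\mathbb{A})$ is a submonoid of $\mathop{\mathrm{Tr}}(A)$ and $\langle E\rangle$ is a clone on the countable set~$A$. Composing~$\theta$ with the inclusion $\iota\colon C\hookrightarrow\mathcal{O}_\Omega$ yields a clone homomorphism $\iota\circ\theta\colon\langle E\rangle\to\mathcal{O}_\Omega$. Since the topology on~$C$ is, by construction, the subspace topology inherited from~$\mathcal{O}_\Omega$ (both are equipped with the topology of pointwise convergence), the inclusion~$\iota$ is a topological embedding, and hence~$\theta$ is continuous if and only if $\iota\circ\theta$ is.

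Next I would verify the hypothesis of Lemma~\ref{6.2} for $\iota\circ\theta$. Its unary part is the composition of $\theta\restriction_{E}\colon E\to C^{(1)}$ with the inclusion $C^{(1)}\hookrightarrow\mathop{\mathrm{Tr}}(\Omega)$. The first factor is continuous by assumption and the second is a topological embedding by the same subspace-topology argument, so the restriction of $\iota\circ\theta$ to~$E$ is continuous. Applying Lemma~\ref{6.2} with the monoid~$E$ on the countable set~$A$ then yields continuity of $\iota\circ\theta$, and consequently continuity of~$\theta$ itself.

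No real obstacle is expected here: the technical content has been absorbed into Lemma~\ref{6.2}, and Corollary~\ref{6.3} is obtained by an essentially bookkeeping argument. The only point worth a brief sentence is the passage between the closed clone~$C$ and the ambient~$\mathcal{O}_\Omega$, which is handled by the observation that the topology on~$C$ is induced from that of $\mathcal{O}_\Omega$, so continuity into either target is the same condition.
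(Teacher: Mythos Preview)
Your proposal is correct and follows essentially the same approach as the paper: compose~$\theta$ with the inclusion $\iota\colon C\hookrightarrow\mathcal{O}_\Omega$, observe that the unary restriction $\iota'\circ\theta\restriction_E$ is continuous by the subspace-topology argument, apply Lemma~\ref{6.2}, and pull continuity back to~$\theta$ via the subspace topology on~$C$. The paper's proof is organized identically, so there is nothing to add.
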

\begin{proof}
Let us denote by $\iota\colon C\to \mathcal{O}_{\Omega}$ and $\iota'\colon C^{(1)}\to
\mathop{\mathrm{Tr}}(\Omega)$ the inclusion homomorphisms of~$C$
and~$C^{(1)}$ into the full clone and the full transformation monoid on~$\Omega$, respectively. By definition of the subspace
topology on~$C^{(1)}$, $\iota'$ is continuous, so $\iota\theta$ is a clone homomorphism from an essentially at most
unary clone on a countable set into the clone of all operations on the countable carrier set~$\Omega$, whose restriction to
the unary part is $\iota'\theta\restriction_{\mathop{\mathrm{End}}(\mathbb{A})}$ and hence continuous. Letting
$E=\mathop{\mathrm{End}}(\mathbb{A})$ in Lemma~\ref{6.2} we deduce that $\iota\theta$ is continuous; since
$\mathop{\mathrm{im}}(\theta)\subseteq C$, it follows that~$\theta$ is continuous, too.
\end{proof}

As another consequence of Lemma~\ref{6.2}, automatic continuity can be lifted from closed transformation monoids to their
generated clones.
\begin{corollary} \label{6.4} If~$A$ is a countable set, and $E \subseteq \mathop{\mathrm{Tr}}(A)$ is a closed transformation monoid
with automatic continuity, then the essentially at most unary clone~$\langle E\rangle$ generated by it inherits this property.
\end{corollary}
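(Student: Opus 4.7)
The strategy is to invoke Lemma~\ref{6.2} directly, reducing automatic continuity of the clone $\langle E\rangle$ to that of its unary part~$E$. Concretely, given any countable set~$\Omega$ and any clone homomorphism $\theta\colon\langle E\rangle\to \mathcal{O}_\Omega$, my task is to show that~$\theta$ is continuous, which is exactly the definition of automatic continuity for $\langle E\rangle$.

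The first step is to observe that the unary restriction $\theta\restriction_E\colon E\to\mathop{\mathrm{Tr}}(\Omega)$ is a well-defined monoid homomorphism. Here I am using that the unary part of $\langle E\rangle$ coincides with~$E$ (take $k=1$ in the explicit description of $\langle E\rangle$ recalled before Lemma~\ref{6.1}, so that $\pi_1^{(1)}=\mathrm{id}_A$), that $\mathcal{O}_\Omega^{(1)} = \mathop{\mathrm{Tr}}(\Omega)$, and that a clone homomorphism preserves composition and sends the identity projection to the identity. The second step is then to apply the hypothesis: since~$\Omega$ is countable and~$E$ has automatic continuity, the monoid homomorphism $\theta\restriction_E$ is continuous. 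Finally, Lemma~\ref{6.2} applies verbatim to yield continuity of~$\theta$ itself, finishing the argument.

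There is essentially no obstacle here, because all the technical work has already been carried out in Lemma~\ref{6.2} (which propagates continuity from the unary level to the whole essentially at most unary clone by the case analysis on whether the limit function is constant). The only additional remark worth making is that Lemma~\ref{6.1} guarantees that $\langle E\rangle$ is a closed clone on~$A$, so the corollary is indeed a bona fide instance of automatic continuity for closed clones in the sense of~\cite{Bodirsky}, rather than a statement about an ad hoc non-closed object.
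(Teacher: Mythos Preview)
Your proposal is correct and follows essentially the same approach as the paper: both note that $\langle E\rangle$ is closed by Lemma~\ref{6.1}, observe that the unary restriction of any clone homomorphism $\theta\colon\langle E\rangle\to\mathcal{O}_\Omega$ is a monoid homomorphism $E\to\mathop{\mathrm{Tr}}(\Omega)$ and hence continuous by automatic continuity of~$E$, and then invoke Lemma~\ref{6.2} to conclude.
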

\begin{proof}
By Lemma~\ref{6.1}, the clone~$\langle E\rangle$ is closed. If $\theta\colon \langle E\rangle\to \mathcal{O}_\Omega$ is a clone homomorphism
into the full clone on a countable set~$\Omega$, then its restriction to the unary part is the monoid homomorphism
$\theta\restriction_{E}\colon E\to \mathop{\mathrm{Tr}}(\Omega)$, which is continuous by the assumption on~$E$. By Lemma~\ref{6.2},
$\theta$ is continuous.
\end{proof}

\begin{lemma} \label{6.5} If~$A$ is a countable set, and $E \subseteq \mathop{\mathrm{Tr}}(A)$ is a closed transformation monoid which
has automatic homeomorphicity and its group of invertible members~$G$ acts transitively on~$A$, then~$\langle E \rangle$ also
has automatic homeomorphicity.
\end{lemma}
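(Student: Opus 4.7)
The plan is to show that any clone isomorphism $\theta\colon \langle E\rangle\to C'$ onto a closed subclone $C'\subseteq\mathcal{O}_\Omega$ (with $\Omega$ countable) is a homeomorphism, by invoking Lemma~\ref{6.2} once for $\theta$ and once for $\theta^{-1}$, after leveraging the automatic homeomorphicity of~$E$ to settle the unary stratum.

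First I would examine what~$\theta$ does at arity~$1$. Since~$\theta$ preserves arities and projections, its restriction $\theta\restriction_E\colon E\to C'^{(1)}$ is a monoid isomorphism onto $C'^{(1)} = \theta(E)$, and every $f\in C'^{(k)}$ has the form $\theta(g\circ\pi_i^{(k)}) = \theta(g)\circ\pi_i^{(k)}$ for some $g\in E$ and $1\le i\le k$; in particular, $C' = \langle C'^{(1)}\rangle$. Because~$C'$ is closed in $\mathcal{O}_\Omega$ and $\mathop{\mathrm{Tr}}(\Omega)$ is the unary clopen stratum of $\mathcal{O}_\Omega$, the set~$C'^{(1)}$ is a closed submonoid of $\mathop{\mathrm{Tr}}(\Omega)$. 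As~$\Omega$ is countable and~$E$ has automatic homeomorphicity by hypothesis, the monoid isomorphism $\theta\restriction_E$ onto this closed submonoid must be a homeomorphism; thus both it and its inverse are continuous.

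Next I would apply Lemma~\ref{6.2} twice to pass from unary continuity to full-clone continuity and openness. Applied to the clone homomorphism $\theta\colon\langle E\rangle\to \mathcal{O}_\Omega$ with base transformation monoid~$E$ on~$A$, continuity of $\theta\restriction_E$ yields continuity of~$\theta$. Applied to $\theta^{-1}\colon C' = \langle C'^{(1)}\rangle\to\mathcal{O}_A$ with base transformation monoid $C'^{(1)}$ on~$\Omega$, continuity of $\theta^{-1}\restriction_{C'^{(1)}}$ yields continuity of~$\theta^{-1}$, i.e.\ openness of~$\theta$. Combining both, $\theta$ is a homeomorphism, as required.

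The main conceptual obstacle is setting up the second application of Lemma~\ref{6.2} cleanly: one has to notice that~$C'$ is itself generated by its unary part, so that $\theta^{-1}$ can be treated symmetrically with $C'^{(1)}$ playing the role that~$E$ played for $\theta$. The transitivity of~$G$ on~$A$ stated in the hypothesis does not appear to be strictly necessary for this chain of applications, but it does give a first-order definition of \emph{being constant} inside~$E$ (namely $c\cdot g = c$ for every $g\in E$, since a transitive $G\subseteq E$ forces any such~$c$ to collapse all of~$A$ to a point); via this characterization, $\theta$ automatically sends constant elements to constants, a feature that would be useful for a more direct openness argument avoiding Lemma~\ref{6.2}, and which may well be how the authors exploit the hypothesis.
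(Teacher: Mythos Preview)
Your argument is correct and, for the openness direction, takes a genuinely different route from the paper. Both proofs establish continuity of~$\theta$ the same way: observe that $C'^{(1)}$ is closed, invoke automatic homeomorphicity of~$E$ to conclude that $\theta\restriction_E$ is a homeomorphism, and then feed this into Lemma~\ref{6.2} (the paper goes via Corollary~\ref{6.3}, which is just the same lemma packaged for closed target clones). Where you diverge is in proving openness. The paper cites Proposition~32 of~\cite{Bodirsky}, a result that requires precisely the transitivity hypothesis on~$G$ together with openness of~$\theta\restriction_E$. You instead observe that $C'=\langle C'^{(1)}\rangle$ (since $\theta$ sends $f\circ\pi_i^{(k)}$ to $\theta(f)\circ\pi_i^{(k)}$) and then apply Lemma~\ref{6.2} a second time to~$\theta^{-1}$, with $C'^{(1)}$ playing the role of the generating monoid. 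This symmetric use of Lemma~\ref{6.2} is valid, avoids the external citation, and in fact shows that the transitivity assumption on~$G$ is superfluous---a point you correctly flag. Your speculation about how the authors use transitivity is not quite on target (they do not argue via constants directly; transitivity enters only as a hypothesis of the black-box Proposition~32), but this does not affect the soundness of your proof.
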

\begin{proof}
Let $\theta\colon \langle E \rangle\to C$ be a clone isomorphism between $\langle E \rangle$ and another closed clone~$C$ on a
countable set~$\Omega$. Since~$E$ has automatic homeomorphicity and the unary part of~$C$ is closed---because
$C^{(1)} = C \cap \mathop{\mathrm{Tr}}(\Omega)$ and both sets are closed---the restriction $\theta\restriction_E\colon E\to C^{(1)}$
is a homeomorphism. By Corollary~\ref{6.3} we conclude that~$\theta$ is continuous. To see that it must be open too, we use
Proposition~32 from~\cite{Bodirsky}, which holds for clone isomorphisms and is applicable here since~$G$ acts transitively
on~$A$ and~$\theta\restriction_E$ is open.
\end{proof}
From the previous lemma and Theorems~\ref{2.6} and~\ref{4.4} we obtain the result mentioned in the introduction.
\begin{corollary}\label{6.6}
$\langle\mathop{\mathrm{End}}\left(\mathbb{Q},<\right)\rangle$ and
$\langle\mathop{\mathrm{End}}\left(\mathbb{Q},\leq\right)\rangle$
have automatic homeomorphicity.
\end{corollary}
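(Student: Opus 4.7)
The plan is to apply Lemma~\ref{6.5} directly to each of the two monoids $\mathop{\mathrm{End}}(\mathbb{Q},<)$ and $\mathop{\mathrm{End}}(\mathbb{Q},\leq)$, viewed as transformation monoids on the countable set $\mathbb{Q}$. So the whole proof amounts to checking the three hypotheses of that lemma in each case.

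First, I would note that both monoids are closed in $\mathop{\mathrm{Tr}}(\mathbb{Q})$: being defined as the set of maps preserving a fixed relation, each is the set of polymorphisms of arity one of a relational structure, hence closed in the product topology. Second, each monoid has automatic homeomorphicity: for $M = \mathop{\mathrm{End}}(\mathbb{Q},<)$ this is exactly Theorem~\ref{2.6}, and for $E = \mathop{\mathrm{End}}(\mathbb{Q},\leq)$ this is Theorem~\ref{4.4}. Third, the group of invertible elements of each monoid is $G = \mathop{\mathrm{Aut}}(\mathbb{Q},<) = \mathop{\mathrm{Aut}}(\mathbb{Q},\leq)$, which is well known to act transitively on $\mathbb{Q}$ (given $p,q\in\mathbb{Q}$, a back-and-forth argument between the countable dense linear orders $\mathbb{Q}$ and $\mathbb{Q}$ that starts by sending $p$ to $q$ produces such an automorphism).

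With these three facts in place, Lemma~\ref{6.5} applied once with $E := \mathop{\mathrm{End}}(\mathbb{Q},<)$ and once with $E := \mathop{\mathrm{End}}(\mathbb{Q},\leq)$ yields that both $\langle\mathop{\mathrm{End}}(\mathbb{Q},<)\rangle$ and $\langle\mathop{\mathrm{End}}(\mathbb{Q},\leq)\rangle$ have automatic homeomorphicity.

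There is really no substantial obstacle here; all of the hard work has been done in the preceding sections (most notably in Theorems~\ref{2.6} and~\ref{4.4}, and in the general lifting Lemma~\ref{6.5}, whose proof in turn relies on Lemma~\ref{6.1} for closedness of the generated clone, on Corollary~\ref{6.3} for continuity, and on Proposition~32 of~\cite{Bodirsky} for openness). The only thing to be careful about is to spell out that the hypotheses of Lemma~\ref{6.5}---closedness, automatic homeomorphicity of the monoid, and transitivity of the invertible part on the carrier---are all satisfied in each of the two cases.
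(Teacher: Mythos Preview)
Your proposal is correct and follows exactly the same route as the paper, which simply invokes Lemma~\ref{6.5} together with Theorems~\ref{2.6} and~\ref{4.4}. You have merely spelled out the verification of the hypotheses (closedness, automatic homeomorphicity of the monoid, transitivity of~$G$) more explicitly than the paper does.
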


\end{document}